\documentclass[a4paper, pdftex]{article}

\usepackage[utf8]{inputenc}
\usepackage[T1]{fontenc}
\usepackage[pdftex]{graphicx}
\usepackage{amsmath}
\usepackage{amsfonts}
\usepackage{amsthm}
\usepackage{amssymb}
\usepackage{setspace}
\usepackage[format=hang]{caption}
\usepackage{thmtools}
\usepackage{thm-restate}
\usepackage{mathtools}

\begin{document}
\newtheorem{thm}{Theorem}[section]
\newtheorem{cor}[thm]{Corollary}
\newtheorem{lem}[thm]{Lemma}
\newtheorem{fact}[thm]{Fact}
\newtheorem{prop}[thm]{Proposition}
\newtheorem{thmletter}{Theorem}
\renewcommand*{\thethmletter}{\Alph{thmletter}}
\theoremstyle{definition}
\newtheorem{defi}[thm]{Definition}
\theoremstyle{remark}
\newtheorem{rem}[thm]{Remark}
\newtheorem{exm}[thm]{Example}

\newcommand{\set}[1][ ]{\ensuremath{ \lbrace #1 \rbrace}}
\newcommand{\ra}{\ensuremath{\rightarrow}}
\newcommand{\bsl}{\ensuremath{\backslash}}
\newcommand{\grep}[2]{\ensuremath{\left\langle #1 | #2\right\rangle}}
\renewcommand{\ll}{\left\langle}
\newcommand{\rr}{\right\rangle}

\title{Maximal hyperbolic towers and weight in the theory of free groups}
  \author{Benjamin Br\"uck}
  \maketitle

  \maketitle

\begin{abstract}
We show that in general for a given group the structure of a maximal hyperbolic tower over a free group is not canonical:
We construct examples of groups having hyperbolic tower structures over free subgroups which have arbitrarily large ratios between their ranks. These groups have the same first order theory as non-abelian free groups and we use them to study
the weight of types in this theory. 
\end{abstract}

\section{Introduction}
Around 1945, Tarski asked the question whether all non-abelian free groups share the same first order theory. The affirmative was given independently by Kharlampovich and Myasnikov (\cite{MyasnikovElementary}) and Sela ({\cite{SelaDiophantine}}). However, being a free group is not a first order property. This means that in addition to the free groups, there are also non-standard models of their theory $T_{fg}$ (also called \emph{elementary free groups}), i.e. groups that share the same theory as free groups but are not free themselves. Sela gave a geometric description of all finitely generated models of $T_{fg}$ by introducing the notion of a hyperbolic tower. He showed that the following is true (see \cite[Theorem 6]{SelaDiophantine} and the comments on it in \cite{LPSTowers}):

\begin{fact}
\label{introduction trivial tower implies T_fg}
Let $G$ be a finitely generated group. Then $G$ is a model of $T_{fg}$ if and only if $G$ is non-abelian and admits a hyperbolic tower structure over the trivial subgroup.
\end{fact}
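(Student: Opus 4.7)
Since the statement is a biconditional I would split the proof into the two implications and handle them separately, both being ultimately due to Sela.

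For the forward direction (tower over the trivial subgroup plus non-abelianness implies a model of $T_{fg}$), the plan rests on the key structural theorem of Sela that in a hyperbolic tower each floor is \emph{elementarily embedded} in the one above, provided the non-degeneracy conditions built into the definition of a floor (surfaces of sufficiently negative Euler characteristic with appropriate boundary identifications, or free product with a non-abelian free group, etc.) are satisfied. One observes that since $G$ is non-abelian, the first non-trivial floor above the trivial base must either already be a non-abelian free group or, after absorbing the surface pieces of that floor, contain one as a retract. Calling this free subgroup $F \leq G$, an induction on the height of the tower, applying the per-floor elementary embedding statement at each level, shows $F$ is elementarily embedded in $G$. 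Hence $G$ satisfies exactly the first order sentences satisfied by the non-abelian free group $F$, i.e. $G \models T_{fg}$.

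For the converse I would appeal to Sela's classification of finitely generated models of $T_{fg}$, which is effectively a reverse engineering of the tower from the theory. Fix a non-abelian free group $F$ with $\mathrm{Th}(F) = \mathrm{Th}(G)$ and consider a sequence of homomorphisms $F \ra G$ produced by realizing formulas from $\mathrm{Th}(G)$ in $F$; the associated limiting action of $G$ on a real tree, analysed via the Bestvina-Paulin construction and the Rips machine, yields a (cyclic) JSJ-type decomposition of $G$ as a limit group. The plan is then to peel off floors inductively from the top: identify surface-type vertex groups and free-factor vertex groups allowed by the decomposition, verify the retraction and non-degeneracy conditions using shortening quotients, and pass to the subgroup obtained by removing the floor. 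The procedure terminates, and the non-abelianness of any model of $T_{fg}$ (witnessed by the first-order sentence $\exists x\, \exists y\, [x,y] \neq 1$) forces the residual base to be taken to be trivial in the tower data.

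The principal obstacle is the converse. Extracting a full tower structure merely from elementary equivalence with a free group requires the full Sela package: JSJ decompositions of limit groups, modular automorphism groups, test sequences, and the shortening argument with careful bookkeeping of how definable sets pull back along the retractions. The forward direction is, in comparison, almost formal once the per-floor elementary embedding theorem is available as a black box, so in a self-contained write-up I would expect the converse to consume nearly all of the real work.
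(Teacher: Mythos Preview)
The paper does not prove this statement at all. It is stated as a \emph{Fact}, attributed to Sela, with the citation ``see \cite[Theorem 6]{SelaDiophantine} and the comments on it in \cite{LPSTowers}'' given in the surrounding text. The paper treats it as a black box from the literature and moves on; the same result is later restated as Fact~\ref{trivial tower implies T_fg} (in the equivalent ``over a free subgroup'' formulation), again without proof.

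Your proposal, by contrast, is a sketch of how one might prove Sela's theorem itself. As such it is broadly accurate in identifying the two main ingredients---the per-floor elementary embedding statement for one direction, and the JSJ/limit-group/shortening machinery for the other---and you are right that the converse is where essentially all the difficulty lies. But this is far beyond what the paper attempts, and a genuine proof along these lines would require the full content of Sela's Diophantine geometry sequence (or the parallel Kharlampovich--Myasnikov work). So there is nothing to compare: the paper simply cites the result, and you have outlined what a proof would involve rather than reproduced or diverged from anything in the paper.
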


Furthermore and more surprisingly, Sela showed in \cite{SelaCommonTheory} that the common theory $T_{fg}$ of non-abelian free groups is stable. This provided a new and rich example of a group that is, on the one hand, a classical and complex structure but, on the other hand, tame enough in the model theoretic sense to allow the application of the various tools developed in stability theory. Conversely, the study of free groups in algebra and topology has brought forth many geometric methods that can now be used to refine stability-theoretic analysis.

This is the context in which this article is set. Motivated by model theoretic ideas, we seek to gain a better understanding of hyperbolic towers by applying geometric tools that include Whitehead graphs, Bass-Serre theory, and covering spaces.

If $G$ is a non-abelian, finitely generated group, we call a free subgroup $H\leq G$ a \emph{maximal free ground floor}, if $G$ admits a hyperbolic tower structure over $H$, but not over any other free subgroup in which $H$ is a free factor. From the perspective of model theory, a basis of $H$ now plays a similar role for the group $G$ that a basis plays for an arbitrary free group, meaning that both such sets have the same type and are maximal independent with respect to forking independence over $\emptyset$. This is clear if $G$ is a free group itself, but much more interesting if $G$ is a non-standard model of $T_{fg}$ where we have a priori no notion of a basis.
Our main result in the first part of this article is:

\begin{thmletter}
\label{Theorem differences in sizes}
For each $n\in \mathbb{N}$, there is a finitely generated group that has one hyperbolic tower structure over a maximal free ground floor of basis length $2$ and another tower structure over a maximal free ground floor of basis length $n+2$.
\end{thmletter}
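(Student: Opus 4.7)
My plan is to exhibit, for each $n\in\mathbb{N}$, an explicit finitely generated group $G_n$ admitting two hyperbolic tower structures with maximal free ground floors of ranks $2$ and $n+2$. Once such $G_n$ is built, Fact~\ref{introduction trivial tower implies T_fg} guarantees that $G_n$ is a model of $T_{fg}$. The natural construction is a surface-theoretic amalgam: take a small free group $F_2 = \langle a,b\rangle$, a larger free group $F_{n+2}$, and an orientable surface $S$ with boundary, amalgamated so that one subset of boundary components of $S$ is identified with words in $F_2$ and the complementary subset with words in $F_{n+2}$. Depending on which free factor is viewed as the base, this gives $G_n$ the structure of a hyperbolic floor either over $F_2$ (with the surface glued to $F_{n+2}$ playing the role of an upper floor) or over $F_{n+2}$ (symmetrically). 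An essentially equivalent alternative is to take a closed surface group of sufficiently high genus and display two incompatible essential multicurves realising subsurface decompositions whose base components have free fundamental groups of the desired ranks.

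For each of the two decompositions I would verify directly that the retraction axiom of a hyperbolic floor holds: surfaces with boundary retract onto their boundaries, and since the boundary components embed into the corresponding free base group via the attaching data, the required retraction onto the base is manifest from the amalgam description. This will certify that $G_n$ indeed admits the stated tower structures over $F_2$ and over $F_{n+2}$; the upper floors can be chosen to be single hyperbolic floors (one surface each), so the tower has height one in each case.

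The main obstacle is proving \emph{maximality} of the two ground floors: no free subgroup $F'\leq G_n$ properly containing $F_2$ (respectively $F_{n+2}$) as a free factor may still support a tower structure on $G_n$. The natural tool is the Whitehead graph of the attaching boundary words of the upper floor surface, read in a basis of any candidate enlarged free subgroup. If these Whitehead graphs are connected and admit no cut vertex, then by the Whitehead algorithm the attaching words cannot be shortened under any Nielsen-type change of basis, which rules out absorbing part of the upper floor into the ground floor and so forbids any enlargement $F'$. Ensuring that the attaching words can be chosen with this property uniformly in $n$ — particularly on the rank-$(n+2)$ side, where word-length and complexity grow with $n$ — is the technically delicate part of the argument. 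A complementary Bass--Serre-theoretic analysis of the JSJ splitting of $G_n$ induced by the amalgam will likely be needed to rule out enlargements that are not directly detected by a single Whitehead graph, for instance enlargements that introduce a new stable letter rather than absorbing an edge of the existing splitting.
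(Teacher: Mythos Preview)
Your construction has a structural gap that breaks both tower structures, not just the maximality step. In a hyperbolic floor $(G,G',r)$, the subgroup $G'$ is by definition the free product of \emph{all} non-surface-type vertex groups of the splitting (Definition~\ref{Definition hyperbolic floor}(2)). In your amalgam with vertex groups $F_2$, $\pi_1(S)$, $F_{n+2}$, both $F_2$ and $F_{n+2}$ are non-surface vertices, so the only hyperbolic floor this splitting exhibits is one over $F_2\ast F_{n+2}$, never over $F_2$ or $F_{n+2}$ alone. You cannot simply ``view $F_{n+2}$ together with $S$ as the upper floor'': that union is not a surface with boundary, so it cannot serve as a surface-type vertex. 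If instead you realise the tower over $F_2$ via the bottom decomposition $G_m=H\ast F$ with $H=F_2$ and $F=F_{n+2}$, then $G$ is automatically also a tower over $F_2\ast\langle e\rangle$ for any basis element $e$ of $F_{n+2}$, so $F_2$ fails maximality for a trivial reason, long before Whitehead graphs enter. The same objection applies symmetrically to $F_{n+2}$.

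The paper avoids this by a genuinely different idea: it does \emph{not} place both free groups in the same splitting. It builds a single group $G^n$ as an $n$-storey tower of four-times-punctured spheres over $F_2$, and then produces an explicit isomorphism (Proposition~\ref{presentations}) to a group $\tilde G^n$ which is an $n$-storey tower of once-punctured tori over $F_{n+2}$. The two free subgroups live in $G^n$ via \emph{different} graph-of-groups decompositions, related by re-cutting a genus-two handlebody-with-arc along transverse curve systems (Section~\ref{2s5s}). Maximality is then obtained from a uniform Euler-characteristic criterion (Theorem~\ref{maximaltowers}): because every surface in either tower has $\chi\in\{-1,-2\}$ and the right boundary count, any competing tower must match it floor by floor via a covering-space argument, with no room for a larger free base. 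Your Whitehead-graph approach targets a different obstruction (primitivity of boundary words) and would not by itself rule out enlargements arising from alternative splittings; in any case it is moot until the underlying construction is repaired.
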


We explicitely construct these different tower structures, building on ideas of Louder, Perin and Sklinos (see \cite{LPSTowers}).

Closely related to this is the weight of the type $p_0$ of a primitive element in a free group. Pillay showed in \cite{PilForking} that $p_0$ is the unique generic type over the empty set in $T_{fg}$. In general, if a type $p$ has finite weight, its weight bounds the ratio of the sizes of maximal independent sets of realisations of $p$. Hence, Theorem \ref{Theorem differences in sizes} can also be seen as an alternative proof for the infinite weight of $p_0$, a fact already proven by Pillay (\cite{PilGenweight}) and Sklinos (\cite{Rizinfweight}). 

In the last section of this article, we extend Sklinos' techniques in order to generalise this result as follows:

\begin{restatable}{thmletter}{Theoreminfiniteweight}
\label{Theoreminfiniteweight}
In $T_{fg}$, every non-algebraic (1-)type over the empty set that is realised in a free group has infinite weight.
\end{restatable}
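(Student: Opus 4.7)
The plan is to show that for every $n\in\mathbb{N}$, the type $p$ admits $n$ pairwise forking-independent realisations $a_1,\ldots,a_n$ together with a finite tuple $\bar c$ such that $a_i$ forks with $\bar c$ over $\emptyset$ for every $i$; this establishes pre-weight at least $n$ for each $n$, and hence infinite weight of $p$.

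Fix $n$, and let $p$ be realised by $a\in F$ with $F$ a free group. First I produce the $n$ independent realisations by taking $n$ copies $F_1,\ldots,F_n$ of $F$ with distinguished copies $a_i\in F_i$ of $a$, and forming the free product $F^{(n)}:=F_1*\cdots*F_n$. Each $F_i$ is a free factor, and free factors of a free group give elementary subgroups (Perin--Sklinos), so $a_i$ realises $p$ in $F^{(n)}$. Moreover the $a_i$ are pairwise forking-independent over $\emptyset$, since the free product decomposition supplies the geometric witness for independence in $T_{fg}$.

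Next, extending Sklinos' techniques together with the hyperbolic tower constructions used in the first part of the paper, I embed $F^{(n)}$ into a finitely generated model $G\models T_{fg}$ carrying a hyperbolic tower structure over a free subgroup $H$ whose rank is bounded independently of $n$, in such a way that the inclusion is elementary on each $F_i$. Then each $a_i$ still realises $p$ in $G$, and a basis $\bar c$ of $H$ provides the required common tuple: by the geometric criteria for forking in $T_{fg}$, none of the $a_i$ can be independent from $\bar c$, for otherwise the tower structure would yield a free decomposition of $G$ that is incompatible with the embedding of $F^{(n)}$. Since the rank of $H$ stays bounded while $n$ is arbitrary, this gives infinite weight.

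The main obstacle is the embedding step: when $p=p_0$, the construction behind Theorem \ref{Theorem differences in sizes} provides $G$ immediately, because primitivity is preserved by the standard tower moves on surface pieces. For an arbitrary $a\in F$, the tower has to be built so that the geometric invariants of $a\in F$ which determine $\mathrm{tp}(a)$ (Whitehead graph data, JSJ-style decomposition, etc.) are respected by the construction, and the inclusion $F^{(n)}\hookrightarrow G$ has to remain elementary on each factor. This is the point at which Sklinos' techniques must be extended most carefully, and it is the reason the statement is restricted to types realised \emph{in a free group}: only then is there a concrete geometric realisation whose invariants can be tracked through the tower construction.
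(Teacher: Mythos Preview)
Your proposal has a genuine gap at the very first step: the roles of the single realisation and the independent family are reversed. By definition, to show $\text{prwt}(p)\geq n$ for $p=tp(a/\emptyset)$ you must exhibit \emph{one} realisation $a$ of $p$ together with an independent set $\{b_1,\ldots,b_n\}$ such that $a$ forks with each $b_i$. You instead produce an independent set $\{a_1,\ldots,a_n\}$ of realisations of $p$ and a single tuple $\bar c$ forking with each $a_i$; that witnesses large preweight for $tp(\bar c/\emptyset)$, not for $p$. The conclusion ``this establishes pre-weight at least $n$'' therefore does not follow.

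Even setting this aside, the embedding step is not a proof but a programme: you assert that $F^{(n)}$ can be placed inside a finitely generated model $G$ with a tower over a free group $H$ of rank bounded independently of $n$, in such a way that each inclusion $F_i\hookrightarrow G$ is elementary and each $a_i$ forks with a basis of $H$. None of these claims is established. The tower constructions of Section~\ref{Section build the towers} do not produce such a $G$ for arbitrary $a$, and there is no mechanism offered for why the forking claim should hold; ``otherwise the tower structure would yield a free decomposition incompatible with the embedding'' is not an argument.

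The paper's proof proceeds in the correct direction and is entirely concrete. One fixes a cyclically reduced realisation $a\in F_n$ of $p$, writes down an explicit independent sequence $(b_i)_{i<\omega}$ in increasing free groups (Lemma~\ref{my sequence}), and then uses Whitehead graphs: for $i\geq n$ the graph $W_{\{a,b_i\}}$ in $F_{i+1}$ has no cut vertex, so by Stallings' criterion (Fact~\ref{separable -> cut vertex}) the set $\{a,b_i\}$ is not separable, hence by Fact~\ref{separability and independence} $a$ forks with $b_i$. No hyperbolic towers enter this argument.
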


The organisation of the article is as follows: We start in Section \ref{Section Bass Serre and towers} with a short account of Bass-Serre theory and surface groups before presenting the definition of a hyperbolic tower. In Section \ref{Section model theory}, we give some model-theoretic basics. Afterwards, we present a criterion for a subgroup to be a maximal free ground floor in Section \ref{Section maximal ground floors} and use this to prove Theorem \ref{Theorem differences in sizes} in Section \ref{Section build the towers}. Finally, Section \ref{Section infinite weight} contains more details about weight and introduces Whitehead graphs in order to prove Theorem \ref{Theoreminfiniteweight}.

The results of this article are taken from the author's master thesis. Many thanks are due to Rizos Sklinos and Tuna Altinel for all their help, time and patience during the creation of this work. I would also like to thank Katrin Tent for her helpful advise especially on the final presentation of this article. Furthermore, I am grateful for Chloé Perin's comments that made it possible to state Theorem \ref{maximaltowers} in a more general form and to simplify its proof. I would like to thank the anonymous referee for helpful comments.

\section{Bass-Serre theory and hyperbolic towers}
\label{Section Bass Serre and towers}

In this section, we collect some notions from geometric group theory needed for this article, define hyperbolic towers and give the results about them that we will use later. It follows \cite[Section 3]{LPSTowers}.
\subsection{Bass-Serre theory}
We begin with Bass-Serre theory and will only give the ideas and most important definitions. For more details, the reader is referred to \cite{Trees}.

A \emph{graph of groups} is a connected graph $\Gamma$, together with two collections of groups, $\set[G_v]_{v\in V(\Gamma)}$ (the \emph{vertex groups}) and $\set[G_e]_{e\in E(\Gamma)}$ (the \emph{edge groups}), and, for each edge $e\in E(\Gamma)$ that has endpoints $v_1$ and $v_2$, two embeddings $\alpha_e:G_e\hookrightarrow G_{v_1}$ and $\omega_e:G_e\hookrightarrow G_{v_2}$. We denote such a graph of groups by $(\mathbb{G},\Gamma)$.
To a graph of groups we can associate its \emph{fundamental group} $\pi_1(\mathbb{G},\Gamma)$. It is defined by
\begin{equation*}
\pi_1(\mathbb{G},\Gamma){\coloneqq}
\left\langle{
\begin{array}{c@{\hspace{0.5ex}:\hspace{0.5ex}}l|c@{\hspace{0.5ex}:\hspace{0.5ex}}l}
G_v &v\in V(\Gamma),& t_e^{-1}\alpha_e(g) t_e=\omega_e(g) &e\in E(\Gamma), g\in G_e,  \\
t_e&e\in E(\Gamma)\,&t_e=1 &e\in E(\Gamma_0)\\
\end{array}}\right\rangle,
\end{equation*}
where $\Gamma_0\subseteq \Gamma$ is a maximal tree in $\Gamma$. So this fundamental group consists of the elements of the vertex groups of $(\mathbb{G},\Gamma)$, together with new so-called \emph{Bass-Serre elements} $t_e$ which are introduced for each edge $e$ of $\Gamma$. The relations inside the vertex groups stay as before. Relations between elements of different vertex groups are defined by identifying images of the given embeddings up to conjugation with the corresponding $t_e$. Furthermore, whenever $e$ takes part of a fixed maximal tree $\Gamma_0$, the corresponding element $t_e$ is made trivial. The remaining non-trivial Bass-Serre elements are called \emph{Bass-Serre generators}. This means that $\pi_1(\mathbb{G},\Gamma)$ is derived from the vertex groups by a series of amalgamated products or HNN-extensions where the stable letter is the corresponding Bass-Serre generator. One can show that the isomorphism class of this fundamental group does not dependent on the choice of $\Gamma_0$. However, taking another maximal subtree changes the presentation of $\pi_1(\mathbb{G},\Gamma)$ and the choice of Bass-Serre generators.

Whenever we have a graph of groups decomposition (or \emph{splitting}) of a group $G$ (i.e. a graph of groups with fundamental group $G$), we can find a canonical action of $G$ on a simplicial tree $T$ whose quotient $G\bsl T$ is isomorphic to $\Gamma$. On the other hand, whenever $G$ acts on a simplicial tree $T$ without inversions, we get a graph of groups decomposition of $G$ with underlying graph isomorphic to $G\bsl T$. In both cases we know that vertex (respectively edge) groups of the graph of groups are conjugate to the stabilisers  of the vertices (respectively edges) of the action on $T$. In this situation, an element or a subgroup of $G$ fixing a point in $T$ is called \emph{elliptic}.

The easiest example of this is the case where $G=H\ast R$ is the free product of subgroups $H$ and $R$. In this case, the corresponding graph of groups has one edge connecting two vertices, one with vertex group $H$, the other with vertex group $R$. The edge group is trivial. If we take the same setting with a non-trivial edge group, we get an amalgamated product of $H$ and $R$.

Given an action on a tree, there several 
 different corresponding graph of groups decompositions corresponding to a choice of ``presentation''  that is defined as follows:
\begin{defi}
Let $G$ be a group acting on a tree $T$ without inversions, denote by $(\mathbb{G},\Gamma)$ the associated graph of groups and by $p$ the quotient map $p:T\to \Gamma$. A \emph{Bass-Serre presentation} for $(\mathbb{G},\Gamma)$ is a pair $(T^1,T^0)$ consisting of
\begin{itemize}
\item a subtree $T^1$ of $T$ which contains exactly one edge of $p^{-1}(e)$ for each edge $e$ of $\Gamma$;
\item a subtree $T^0$ of $T^1$  which is mapped injectively by $p$ onto a maximal sub\-tree $\Gamma_0$ of $\Gamma$.
\end{itemize}
\end{defi}

\subsection{Surface groups}
In the whole text, we assume all surfaces to be connected and compact.

It is a standard fact from the classification of surfaces that every surface $\Sigma$ is determined up to homeomorphism by its orientability, its Euler characteristic $\chi (\Sigma)$ and the number of its boundary components $b(\Sigma)$. The sphere has Euler characteristic $2$, the torus has characteristic $0$. Puncturing a surface decreases its Euler characteristic by $1$. If we decompose a surface $\Sigma$ into two surfaces $\Sigma_1$ and $\Sigma_2$, we have $\chi(\Sigma)=\chi(\Sigma_1)+\chi(\Sigma_2)$.

If $\Sigma$ is a surface with non-empty boundary, each of its boundary components has a cyclic fundamental group, which gives rise to a conjugacy class of cyclic subgroups in $\pi_1(\Sigma)$. They are called \emph{maximal boundary subgroups}. A \emph{boundary subgroup} of $\pi_1(\Sigma)$ is a non-trivial subgroup of a maximal boundary subgroup.

Let $\Sigma$ be an orientable surface with $r$ boundary components. Then $\pi_1(\Sigma)$ has a presentation of the form
\begin{gather*}
\grep{y_1,\ldots , y_{2m},s_1,\ldots ,s_r}{[y_1,y_2]\ldots [y_{2m-1},y_{2m}]=s_1\ldots s_r}
\end{gather*}
where $\chi(\Sigma)=-(2m-2+r)$ and $s_1,\ldots, s_r$ are generators of non-conjugate maximal boundary subgroups. In particular, if $\Sigma$ has non-empty boundary, we can apply a Tietze transformation by removing one of the $s_i$'s and the relation and thus get another presentation of $\pi_1(\Sigma)$ which shows that it is a free group of rank $1-\chi(\Sigma)$. This is true for non-orientable surfaces as well.

Let $\Sigma$ be a surface with non-empty boundary and $P{\coloneqq}\pi_1(\Sigma)$ its fundamental group. Let $\mathcal{C}$ be a set of 2-sided disjoint simple closed curves on $\Sigma$ that allows a collection $\set[T_c|c\in \mathcal{C}]$ of disjoint open neighbourhoods of the curves in $\mathcal{C}$ with homeomorphisms $c\times (-1,1)\to T_c$ sending $c\times\set[0]$ onto $c$. Assume in addition that no component of $\Sigma\bsl\cup\, \mathcal{C}$ has trivial fundamental group. Then we get a splitting of the group $P$ that we call the \emph{decomposition of $P$ dual to $\mathcal{C}$}. It is defined as follows: For each connected component $\Sigma_k$ of $\Sigma\bsl \bigcup_{c\in \mathcal{C}}T_c$ we get a vertex whose vertex group is $\pi_1(\Sigma_k)$. For each curve in $\mathcal{C}$ separating the components $\Sigma_k$ and $\Sigma_{k'}$, we get an edge $e_c$ with infinite cyclic edge group between the vertices corresponding to $\Sigma_k$ and $\Sigma_{k'}$ (we allow $k=k'$). Using functoriality of $\pi_1$, the inclusion maps $c\hookrightarrow \Sigma_k$ induces the embeddings of the edge groups. Such a decomposition is called the \emph{decomposition of $P$ dual to $\mathcal{C}$}. Note that here, all boundary subgroups are elliptic and edge groups are infinite cyclic. The following lemma gives a converse for this. Originally being {\cite[Theorem III.2.6.]{MSValuations}}, this version is a slight variation given in \cite{LPSTowers} as Lemma 3.2.

\begin{lem}
\label{dual decomposition}
Let $\Sigma$ be a surface with non-empty boundary and $P{\coloneqq}\pi_1(\Sigma)$ be its fundamental group. Suppose that $P$ admits a graph of groups decomposition $(\mathbb{G},\Gamma)$ in which edge groups are cyclic and boundary subgroups are elliptic. Then there exists a set $\mathcal{C}$ of disjoint simple closed curves on $\Sigma$ such that $(\mathbb{G},\Gamma)$ is the graph of groups decomposition dual to it.
\end{lem}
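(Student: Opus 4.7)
The plan is to construct a $P$-equivariant continuous map $\phi\colon \tilde\Sigma \to T$ from the universal cover of $\Sigma$ to the Bass-Serre tree $T$ of $(\mathbb{G},\Gamma)$, make it transverse to the set $M$ of midpoints of edges of $T$, and take $\mathcal{C}$ to be the projection of $\phi^{-1}(M)$ to $\Sigma$. Recall that $P$ acts on $T$ without inversions with vertex stabilisers conjugate to the $G_v$ and edge stabilisers conjugate to the cyclic $G_e$.

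First I would fix a triangulation of $\Sigma$ in which $\partial\Sigma$ is a subcomplex and lift it to a $P$-invariant triangulation of $\tilde\Sigma$, on which $P$ acts freely and cellularly. For each $P$-orbit of connected components of $\partial\tilde\Sigma$, I would pick a representative $\tilde\partial_i$; its stabiliser is a conjugate of a maximal boundary subgroup, hence by hypothesis fixes some vertex $v_i$ of $T$. I send $\tilde\partial_i$ entirely to $v_i$ and extend equivariantly to the orbit. For each $P$-orbit of interior $0$-cells, I pick a representative, map it to an arbitrary vertex of $T$, and extend equivariantly. Each $1$-cell is then sent to the unique geodesic in $T$ between the images of its endpoints, and each $2$-cell is filled in using contractibility of $T$. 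A small PL perturbation makes $\phi$ transverse to $M$; since $\partial\tilde\Sigma$ is already mapped into the vertex set of $T$ (hence disjoint from $M$), the preimage $L\coloneqq\phi^{-1}(M)$ is a $P$-invariant, disjoint union of properly embedded $1$-submanifolds in the interior of $\tilde\Sigma$.

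Second, projecting $L$ to $\Sigma$ yields a finite (by compactness) disjoint collection $\mathcal{C}$ of simple closed curves. To match the definition of the dual decomposition I would simplify $\mathcal{C}$ by successively discarding curves that are null-homotopic (they bound disks whose excision leaves a smaller system) and curves cobounding annuli with other curves or with boundary components (these would create complementary components with trivial $\pi_1$ or duplicate edges in $\Gamma$). Two-sidedness of the surviving curves is automatic in the orientable case, and in general can be arranged by further surgery and by passing to the boundary of a regular neighbourhood when needed. At each simplification I would check that the induced splitting is unchanged by tracking how the components of $\tilde\Sigma\bsl L$ map into the open stars of vertices of $T$.

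Finally, I would verify that the splitting of $P$ dual to the simplified $\mathcal{C}$ coincides with $(\mathbb{G},\Gamma)$: equivariance of $\phi$ sends each component of $\Sigma\bsl \bigcup\mathcal{C}$ to a well-defined $P$-orbit of vertices of $T$ with matching vertex stabilisers, and each curve in $\mathcal{C}$ to a $P$-orbit of edges of $T$ with matching cyclic edge stabilisers, so that the quotient graph $P\bsl T$ is naturally isomorphic to both $\Gamma$ and to the underlying graph of the dual decomposition. The hard part will be the simplification step: a naive removal of inessential curves can merge complementary pieces or collapse edges of $\Gamma$, so the delicate point is to choose $\phi$ (and carry out the surgeries) in a way that lets one remove all trivial $\pi_1$ pieces and all inessential curves without ever changing the isomorphism type of $(\mathbb{G},\Gamma)$.
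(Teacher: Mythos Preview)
The paper does not prove this lemma; it is quoted without proof, with attribution to Morgan--Shalen \cite[Theorem III.2.6]{MSValuations} in the formulation of \cite[Lemma 3.2]{LPSTowers}. Your outline is precisely the classical Morgan--Shalen argument (equivariant map to the Bass--Serre tree, transversality to midpoints, read off the curve system from the preimage), so there is no methodological contrast to draw with the paper itself.

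Your self-diagnosed worry about the simplification step is the right one, and the standard resolution is slightly different from what you sketch. Rather than first projecting to $\Sigma$ and then deleting inessential curves post hoc, one keeps modifying the equivariant map $\phi$ itself: whenever a component of $\phi^{-1}(M)$ projects to a curve bounding a disc or an annulus (with another curve or with $\partial\Sigma$), the cyclicity of edge stabilisers lets you equivariantly homotope $\phi$ so as to push that component off the midpoint set, strictly decreasing the number of components. Because you are changing the representative of $\phi$ and not the action on $T$, the quotient graph of groups is untouched throughout, and the fear of ``collapsing edges of $\Gamma$'' never materialises. Your final identification step is then straightforward. One caveat worth noting: the dual decomposition, as defined just above the lemma, always has \emph{infinite} cyclic edge groups, so the lemma is really being applied in the paper only in situations where the edge groups of $(\mathbb{G},\Gamma)$ are already nontrivial (cf.\ the passage to an invariant subtree in Lemma \ref{Lemma subtree of action}); you should keep that hypothesis in mind when invoking it.
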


\subsection{Hyperbolic floors and towers}
\label{Hyperbolic towers}
\begin{defi}
\label{Definition graph of groups with surfaces}
A \emph{graph of groups with surfaces} is a graph of groups $(\mathbb{G},\Gamma)$ together with a subset $V_S$ of the vertex set $V(\Gamma)$, such that any vertex $v$ in $V_S$ satisfies:
\begin{itemize}
\item there exists a surface $\Sigma$ with non-empty boundary, such that the vertex group $G_v$ is the fundamental group $\pi_1(\Sigma)$ of $\Sigma$;
\item for each edge $e$ that has endpoint $v$, the embedding $G_e\hookrightarrow G_v$ maps $G_e$ onto a maximal boundary subgroup of $\pi_1(\Sigma)$;
\item this induces a bijection between the set of edges adjacent to $v$ and the set of conjugacy classes of maximal boundary subgroups in $\pi_1(\Sigma)$.
\end{itemize}
The vertices of $V_S$ are called \emph{surface (type) vertices} and, with a slight abuse of language, the vertex groups associated to surface type vertices are called \emph{surface (type) groups}.
The surfaces associated to the vertices of $V_S$ are called the surfaces of $(\mathbb{G},\Gamma)$.
\end{defi}

\begin{defi}
\label{Definition hyperbolic floor}
Let $(G,G',r)$ be a triple consisting of a group $G$, a subgroup $G' \leq G$ and a retraction $r$  from $G$ onto $G'$ (i.e. $r$ is a morphism $G\to G'$ which restricts to the identity on $G'$). We say that $(G,G',r)$ is a \emph{hyperbolic floor}, if there exists a graph of groups with surfaces $(\mathbb{G},\Gamma)$ with associated fundamental group $\pi_1(\mathbb{G},\Gamma)=G$ and a Bass-Serre presentation $(T^1,T^0)$ of $(\mathbb{G},\Gamma)$ such that:
 \begin{enumerate}
 \item all the surfaces of $(\mathbb{G},\Gamma)$ are either once punctured tori or have Eu\-ler cha\-rac\-te\-ris\-tic at most $-2$;
 \item $G'$ is the free product of the stabilisers of the non-surface type vertices of $T^0$;
 \item every edge of $\Gamma$ joins a surface type vertex to a non-surface type vertex;
 \item either the retraction $r$ sends surface type vertex groups of $(\mathbb{G},\Gamma)$ to non-abelian images, or the subgroup $G'$ is cyclic and there exists a retraction $r':G\ast \mathbb{Z} \to G'\ast \mathbb{Z}$ that does this.
\end{enumerate}
\end{defi}

\begin{defi}
Let $G$ be a non-cyclic group and $H\leq G$ a subgroup.
We say that $G$ is a \emph{hyperbolic tower} over $H$ (or \emph{admits a hyperbolic tower structure} over $H$), if there is a sequence of subgroups $G=G_0\geq G_1 \geq \ldots \geq G_m\geq H$ satisfying the following conditions:
\begin{itemize}
\item for each $0\leq i\leq m-1$, there exists a retraction $r_i:G_i\to G_{i+1}$ such that the triple $(G_i, G_{i+1}, r_i)$ is a hyperbolic floor and $H$ is contained in one of the non-surface type vertex groups of the corresponding graph of groups decomposition;
\item $G_m=H\ast F\ast S_1\ast\ldots\ast S_p$ where $F$ is a (possibly trivial) free group, $p\geq 0$ and each $S_i$ is the fundamental group of a closed surface of Euler characteristic at most $-2$. 
\end{itemize}
\end{defi}

\begin{figure}
\begin{center}
\includegraphics{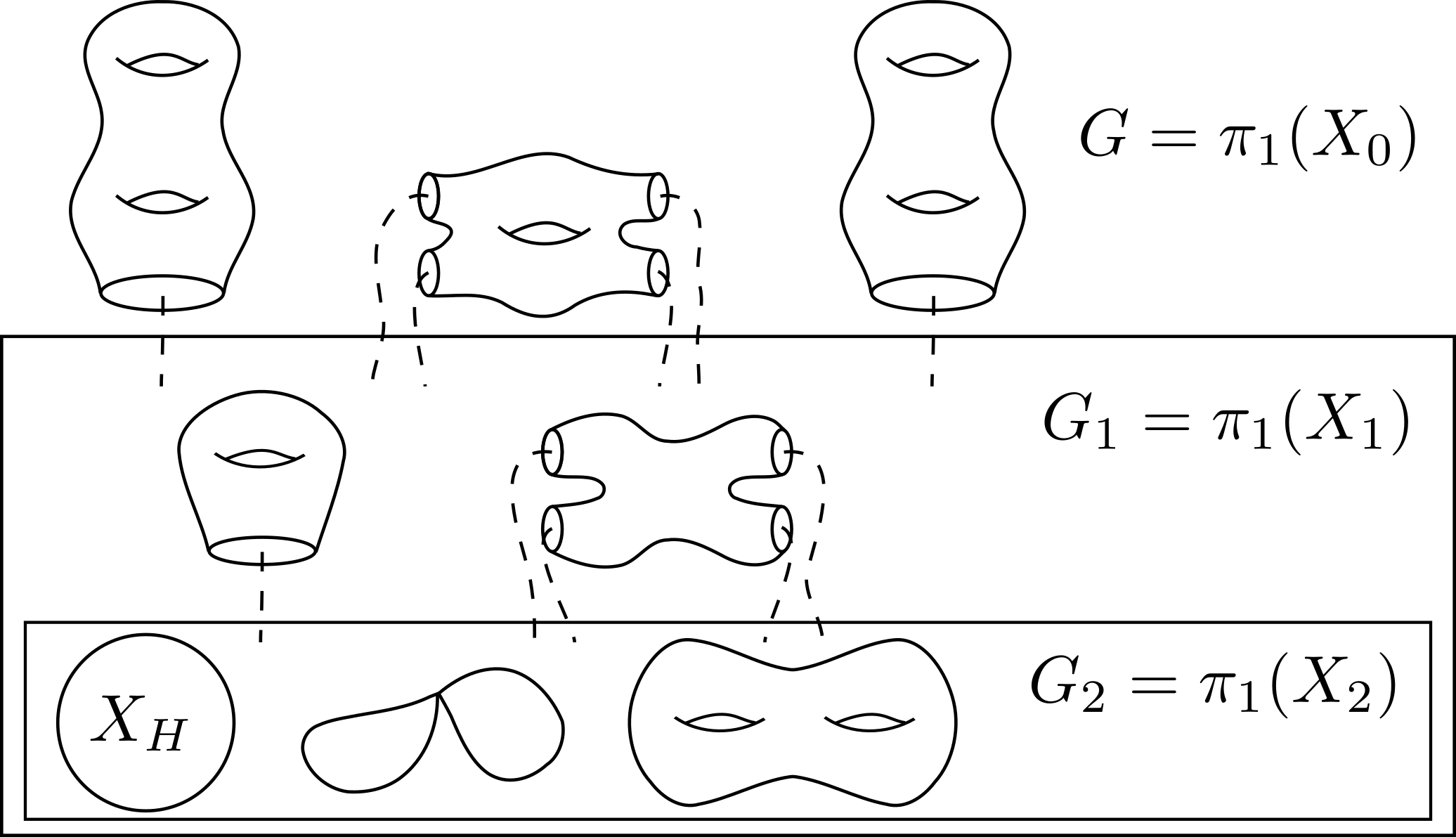}
\end{center}
\caption{A hyperbolic tower over $H$ consisting of two floors}
\label{Starting example}
\end{figure}

It is helpful to have in mind the following image of hyperbolic towers: If $G$ admits a hyperbolic tower structure over $H$, we can see $G$ as the fundamental group of a topological space $X_0$ that is derived from a space $X_H$ having fundamental group $H$ in several steps. We start with a space $X_m$ that is the disjoint union of $X_H$, a graph $X_F$ and closed surfaces $\Sigma_1,\ldots ,\Sigma_p$ of Euler characteristic at most $-2$. When $X_{i+1}$ is constructed, we get $X_{i}$ by gluing surfaces along their boundary components to $X_{i+1}$ such that there exist suitable retractions at the level of fundamental groups. 

An example of this is shown in Figure \ref{Starting example}. The nested boxes mark the sequence of subgroups of $G$. Note that although a surface represents a surface type vertex in the corresponding graph of groups, we did not mark the non-surface type vertices. The ends of the edges starting at the punctured surfaces represent the (here unspecified) points to which their boundary components are glued.

As mentioned in the introduction, hyperbolic towers gain importance for the study of $T_{fg}$ by Fact \ref{introduction trivial tower implies T_fg} which we restate here in the following way:

\begin{fact}
\label{trivial tower implies T_fg}
Let $G$ be a finitely generated group. Then $G$ is a model of $T_{fg}$ if and only if $G$ is non-abelian and admits a hyperbolic tower structure over a free subgroup.
\end{fact}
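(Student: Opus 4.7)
The plan is to treat the statement as a near-cosmetic reformulation of Fact~\ref{introduction trivial tower implies T_fg}. Granting that earlier fact, the work reduces to checking the equivalence that $G$ admits a hyperbolic tower over the trivial subgroup if and only if $G$ admits a hyperbolic tower over some free subgroup.

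The direction from the trivial subgroup to some free subgroup is immediate, because $\{1\}$ is itself free (of rank zero). For the converse, I would take a tower $G = G_0 \geq G_1 \geq \ldots \geq G_m \geq H$ over a free subgroup $H$, with hyperbolic floors $(G_i, G_{i+1}, r_i)$ and bottom $G_m = H \ast F \ast S_1 \ast \ldots \ast S_p$, and absorb $H$ into the free factor: since $H$ is free, $H \ast F$ is a free group, so I can rewrite the bottom as $G_m = \{1\} \ast (H \ast F) \ast S_1 \ast \ldots \ast S_p$, which has exactly the form required for a tower over $\{1\}$.

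The intermediate floors $(G_i, G_{i+1}, r_i)$ need no modification at all. The only clause in the definition of a tower that references the base subgroup asks that $H$ sit inside one of the non-surface type vertex groups of each floor's graph of groups; this condition becomes vacuous once $H$ is replaced by the trivial group. Hence the same sequence, reinterpreted with base $\{1\}$, witnesses that $G$ is a hyperbolic tower over the trivial subgroup. Combining this equivalence with Fact~\ref{introduction trivial tower implies T_fg} then yields the statement in its present form.

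I do not expect a substantial obstacle: the argument is entirely formal. The only point requiring attention is the bookkeeping at the bottom floor, to confirm that the relabelled decomposition $G_m = \{1\} \ast (H \ast F) \ast S_1 \ast \ldots \ast S_p$ still meets all structural conditions (trivial and free factors allowed to be empty or trivial, closed surfaces of Euler characteristic at most $-2$). Everything else follows tautologically from the definitions of hyperbolic floors and towers given above, together with the fact that a free product of free groups is free.
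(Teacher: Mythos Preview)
Your proposal is correct and is precisely the argument the paper has in mind: right after stating the fact, the paper remarks that the trivial group is considered free and that the equivalence with Fact~\ref{introduction trivial tower implies T_fg} ``follows immediately from the definitions''. You have simply spelled out those definitions, absorbing $H$ into the free factor at the bottom and noting that the containment condition on $H$ becomes vacuous when $H=\{1\}$.
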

Here again, we consider the trivial group to be a free group as well. That this is equivalent to the formulation given above follows immediately from the definitions.

\section{Model-theoretic basics}
\label{Section model theory}
In this section, we will give some model theoretic basics. This will be done very briefly because although those model theoretic ideas motivate the constructions given later on, they are not needed to understand them as the definition of hyperbolic towers translates those model theoretic problems in the language of geometric group theory. For a general introduction to model theory, see for example \cite{TentZiegler}, for details about stability theory, see  \cite{PilGeomStabTheory}.

As already mentioned, we know that the common first order theory $T_{fg}$ of non-abelian free groups is stable. Stable theories enjoy a model theoretic notion of independence between elements in a given model which is called forking independence. It can be seen as a generalisation of linear independence in vector spaces and algebraic independence in algebraically closed fields, which are also basic examples for this. From now on, whenever we talk about independence, we mean forking independence. If two elements are not forking independent, we say that they ``fork'' with each other. By the results of Sela, we now can ask whether a set of elements in a free group or another model of $T_{fg}$ is independent or not.

Recall that an $m$-type over a set $A$ of a first order theory $T$ is a maximal consistent set of formulas with parameters in $A$ and at most $m$ free variables. If $G$ is a model of $T$ and $a\in G$, then the \emph{type of $a$ over $A$}, denoted by $tp(a/A)$, is the set of all formulas with parameters in $A$ satisfied by the element $a$. An important property in stable theories is the existence of \emph{generic types} over any set of parameters. A definable set $X$ of a stable group $G$ is said to be \emph{generic}, if finitely many left- (or equally right-) translates of $X$ cover $G$. A formula $\psi(x)$ is called generic, if it defines a generic set. Finally we say that a type is generic, if it contains only generic formulas. Hence we can imagine a generic type to be a type with a ``big'' set of realisations. By results of Poizat, we know that in the theory of free groups, there is a unique generic (1-)type $p_0\in S_1(T_{fg})$ over the empty set (see \cite{PilForking}), namely the type of a primitive element in a free group. 

This type is especially interesting because of the following Fact:

\begin{fact}[\cite{PilForking}]
In a finitely generated, non-abelian free group $F$, a set is a maximal independent set of realisations of $p_0$ if and only if it forms a basis of $F$.
\end{fact}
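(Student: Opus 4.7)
The plan is to prove both directions by combining two ingredients: the characterisation of the realisations of $p_0$ as the primitive elements of $F$, and an appropriate group-theoretic characterisation of forking independence between primitive elements over the empty set. The key claim to extract from Pillay's work on $T_{fg}$ is that a tuple $(a_1,\ldots,a_k)$ of primitive elements of $F$ is forking independent over $\emptyset$ if and only if $a_1,\ldots,a_k$ can be extended to a basis of $F$, equivalently, they are a partial basis. Granting this, the proof reduces to a counting/maximality argument.

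For the forward direction, suppose $B=\set[b_1,\ldots,b_n]$ is a basis of $F$. Each $b_i$ is primitive, hence realises the (unique) generic type $p_0$. Since $B$ is itself a basis, any finite subset of $B$ is a partial basis of $F$, so by the characterisation above $B$ is a forking independent set of realisations of $p_0$. For maximality, suppose $a\in F$ realises $p_0$ and is independent from $B$ over $\emptyset$; then $B\cup\set[a]$ is a forking independent set of primitive elements, hence a partial basis of $F$. This forces the rank of $F$ to be at least $n+1$, contradicting the fact that $B$ is already a basis. Hence no such $a$ exists and $B$ is maximal.

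For the reverse direction, let $S$ be a maximal forking independent set of realisations of $p_0$. By assumption each element of $S$ is primitive, and $S$ is a fortiori finite (in fact bounded by the rank of $F$, by the argument just given). Applying the characterisation in the opposite direction, $S$ extends to a basis $B'$ of $F$. If $S\subsetneq B'$, any $b'\in B'\setminus S$ is primitive (so realises $p_0$) and, because $S\cup\set[b']\subseteq B'$ is still a partial basis, forking independent from $S$ over $\emptyset$. This contradicts the maximality of $S$, so $S=B'$ is a basis of $F$.

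The main obstacle is the characterisation of forking independence between primitive elements, i.e.\ the equivalence ``partial basis $\Leftrightarrow$ forking independent tuple of realisations of $p_0$''. Producing it requires genuine stability-theoretic input specific to $T_{fg}$: one direction uses the definition of $p_0$ as the unique generic type and Poizat-style arguments that generic types of a stable group are translation-invariant and have non-forking extensions under automorphisms (basis elements are related by automorphisms of $F$, which form a subgroup of $\mathrm{Aut}(F)$ acting transitively on partial bases of fixed length); the other direction, showing that forking independent primitive elements extend to a common basis, relies on Pillay's description of forking in $T_{fg}$ together with the fact that a tuple of primitive elements which is not a partial basis satisfies a non-trivial relation, forcing it to be algebraic over, and hence forking dependent on, a strict subtuple. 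Everything else in the argument is essentially bookkeeping.
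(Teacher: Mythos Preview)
The paper does not prove this statement: it is quoted as a Fact from \cite{PilForking} with no argument given, so there is no proof in the paper to compare your attempt against. That said, let me assess the proposal on its own terms.

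Your overall architecture is correct: once one has the equivalence ``$(a_1,\ldots,a_k)$ is a forking independent tuple of realisations of $p_0$ $\Leftrightarrow$ $\{a_1,\ldots,a_k\}$ is a partial basis of $F$'', both directions of the Fact follow exactly as you describe. This equivalence is indeed the entire content of the statement, and you rightly flag it as the main obstacle. Note that the paper later records a strictly more general version of this equivalence as Fact~\ref{independent realizations because of tower} (from \cite{RizosHohogeneity}), phrased via hyperbolic towers; specialised to a free group $F$, ``$F$ is a hyperbolic tower over $\langle a_1,\ldots,a_k\rangle$'' collapses to ``$\langle a_1,\ldots,a_k\rangle$ is a free factor of $F$'', which together with the rank condition is precisely ``$\{a_1,\ldots,a_k\}$ is a partial basis''.

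Where your sketch goes wrong is in the justification of the harder direction of the key equivalence. You write that a tuple of primitive elements which is not a partial basis ``satisfies a non-trivial relation, forcing it to be algebraic over, and hence forking dependent on, a strict subtuple''. This is false on both counts. In $F_2=\langle a,b\rangle$ the primitives $a$ and $bab^{-1}$ generate a free subgroup of rank $2$ (so satisfy no relation) yet do not form a basis, since their images in the abelianisation coincide; and even when primitive elements do satisfy a relation, none need be algebraic over the others in the model-theoretic sense. The honest route to this direction is the free-decomposition characterisation of forking recorded later in the paper as Fact~\ref{separability and independence}: if $a_1,\ldots,a_k$ are independent over $\emptyset$, one inductively obtains a free splitting $F=G_1\ast\cdots\ast G_k$ with $a_i\in G_i$, and primitivity of each $a_i$ in $F$ then forces each $G_i$ to have $a_i$ as a basis element, yielding a partial basis. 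Your automorphism/genericity sketch for the other direction is fine in spirit.
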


This means that at first glance, maximal independent sets of realisations of $p_0$ in non-standard models of $T_{fg}$ could be seen as analogues to bases in free groups. This is due to the fact that they look the same from the perspective of first order logic, meaning that both such sets satisfy exactly the same first order formulas and both are maximal independent with respect to forking independence. However, those sets do not necessarily generate the groups that they are taken from. Furthermore, there is no fixed size  of such a ''basis`` in a non-standard model, as we will show by proving Theorem \ref{Theorem differences in sizes}.

\section{Towers with maximal ground floors}
\label{Section maximal ground floors}
In this section, we will define maximal free ground floors and give an instruction on how to attain such floors by proving Theorem \ref{maximaltowers}. Afterwards, we will provide a model-theoretic approach to these maximal tower structures. 

\subsection{Maximal free ground floors}

\begin{defi} Let $G$ be a finitely generated model of $T_{fg}$. A subgroup $H\leq G$ is called a \emph{maximal free ground floor (in $G$)} if $H$ is free and $G$ admits a hyperbolic tower structure over $H$ but not over any other free subgroup $K\leq G$ in which $H$ is a free factor $K=H\ast H'$.
\end{defi}
Bearing in mind Fact \ref{trivial tower implies T_fg}, the fact that $G$ admits a tower structure over $H$ already implies that it is a model of $T_{fg}$. 

For the proof of Theorem \ref{maximaltowers}, we begin by collecting some lemmas about graphs of groups. The following is part of the statement of {\cite[Theorem III.2.6.]{MSValuations}} and the comments after it.

\begin{lem}
\label{Lemma subtree of action}
Let $\Sigma$ be a surface, possibly with boundary, such that $P{\coloneqq}\pi_1(\Sigma)$ acts on a tree $T$ in a way that all edge stabilisers are cyclic and boundary subgroups act elliptically. Then there is a subtree $T_0$ of $T$ that is invariant under the action of $P$ such that all edge stabilisers of the action of $P$ on $T_0$ are non-trivial, thus infinite cyclic.
\end{lem}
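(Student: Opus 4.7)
My plan is to reduce the claim, via Bass--Serre theory, to the fact that the fundamental group of a compact surface is freely indecomposable relative to its (maximal) boundary subgroups. The first step is to replace $T$ by a minimal $P$-invariant subtree $T_{\min}\subseteq T$. If every element of $P$ acts elliptically on $T$, then $P$ has a common fixed vertex by Serre's fixed-point theorem (applicable since $P$ is finitely generated), and I take $T_0$ to be this vertex; it has no edges, so the conclusion holds vacuously. Otherwise $P$ contains a hyperbolic element and $T_{\min}$ exists uniquely as the union of the axes of hyperbolic elements of $P$. Before proceeding I would note that boundary subgroups remain elliptic on $T_{\min}$: if $H\leq P$ is elliptic with fixed point $v\in T$, then the closest-point projection of $v$ onto the $P$-invariant subtree $T_{\min}$ is still fixed by $H$, by $P$-equivariance of the projection.

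The main claim is that every edge of $T_{\min}$ has non-trivial stabiliser, in which case I can set $T_0\coloneqq T_{\min}$ and cyclicity of edge stabilisers forces them to be infinite cyclic. Suppose for contradiction that $T_{\min}$ contains an edge with trivial stabiliser. I form the $P$-equivariant quotient $\tilde{T}$ of $T_{\min}$ by collapsing each maximal connected subgraph consisting of non-trivial-stabiliser edges to a single vertex. By construction every edge of $\tilde{T}$ is the image of a trivial-stabiliser edge of $T_{\min}$, so $\tilde{T}$ has at least one edge. If $P$ fixed a vertex of $\tilde{T}$, the preimage of that vertex in $T_{\min}$ would be a $P$-invariant subtree; by minimality of $T_{\min}$ this preimage would equal $T_{\min}$, contradicting the existence of a trivial-stabiliser edge in $T_{\min}$. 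Hence $P$ acts on $\tilde{T}$ without a global fixed vertex, and ellipticity passes to $P$-equivariant quotients, so every boundary subgroup of $P$ still acts elliptically on $\tilde{T}$. By Bass--Serre theory applied to this action, whose edge stabilisers are all trivial, $P=\pi_1(\Sigma)$ decomposes non-trivially as a free product (or as a free product with a $\mathbb{Z}$-factor in the HNN case) in which every maximal boundary subgroup is conjugate into one of the factors.

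To obtain a contradiction I invoke the topological fact that $\pi_1(\Sigma)$ is freely indecomposable relative to its maximal boundary subgroups: any free product decomposition in which every boundary subgroup is conjugate into a factor must be trivial. For closed surfaces this is the classical free indecomposability of surface groups; for surfaces with non-empty boundary it can be checked directly on low-complexity pieces such as the pair of pants (a short exercise in the Bass--Serre tree of the free product) and established in general by induction on $-\chi(\Sigma)$, or alternatively by capping off boundary components with disks and pulling back the free indecomposability of the resulting closed surface. This contradicts the non-trivial splitting of $P$ derived above, so every edge of $T_{\min}$ has non-trivial cyclic, hence infinite cyclic, stabiliser, and $T_0\coloneqq T_{\min}$ is the desired subtree. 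The main obstacle I anticipate is precisely the justification of this relative free indecomposability; once it is in hand, the Bass--Serre combinatorics and the minimality argument are routine.
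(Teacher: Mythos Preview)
The paper does not prove this lemma at all; it simply records it as part of \cite[Theorem~III.2.6]{MSValuations} and the remarks following it. Your reduction---pass to the minimal $P$-invariant subtree, collapse the subforest spanned by the non-trivially-stabilised edges, and read off a non-trivial free splitting of $P$ in which every boundary subgroup is elliptic---is a correct way to extract the statement, and the Bass--Serre bookkeeping (Serre's fixed-point lemma for the purely elliptic case, the nearest-point projection showing boundary subgroups remain elliptic in $T_{\min}$, and the minimality argument showing $P$ has no global fixed point in $\tilde T$) is all sound.

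The obstacle you flag is genuine, and of your two proposed remedies only one is viable. The capping-off argument does not work as written: when $\Sigma$ is planar (a sphere with $r\geq 3$ open discs removed) the capped surface is $S^2$, so free indecomposability of $\pi_1(\hat\Sigma)$ carries no information; and even in the non-planar case you must still check that neither image factor $\bar A,\bar B$ collapses to the trivial group after killing the boundary curves it contains, which is not automatic. The induction on $-\chi(\Sigma)$ can be made to work (cut along an essential arc between boundary components and track ellipticity of the new boundary), but it needs to be written out carefully. A slick alternative is to feed the free splitting $P=A\ast B$ directly into Lemma~\ref{dual decomposition}: its edge groups are cyclic (trivial) and boundary subgroups are elliptic, so it is dual to a family of essential simple closed curves on $\Sigma$---but those dual edge groups are infinite cyclic, a contradiction. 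Be aware, though, that Lemma~\ref{dual decomposition} is the very Morgan--Shalen theorem being cited here, so this route exhibits the equivalence of the two statements rather than giving an independent proof.
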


Using this, we deduce:
\begin{lem}
\label{indecomposabilitysurfacegroups}
Let $A_1,\ldots ,\,A_k$ be any groups and let $P\leq A_1\ast\ldots\ast A_k$ be a subgroup of their free product. Assume in addition that $P$ is the fundamental group of a surface with boundary. If every boundary subgroup of $P$ can be conjugated into some $A_i$, the group $P$ can be conjugated into one of those factors as well. If we know in addition that $P\cap A_j\not= \set[1]$, we get $P\leq A_j$.
\end{lem}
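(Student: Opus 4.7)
The plan is to use Bass--Serre theory: view $A_1\ast\cdots\ast A_k$ as the fundamental group of a graph of groups with vertex groups $A_1,\dots,A_k$ and trivial edge groups, let $T$ be its Bass--Serre tree, and feed the restricted $P$-action into Lemma~\ref{Lemma subtree of action}.

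First I would set up the tree. By construction, the vertex stabilisers of $T$ are exactly the conjugates of the $A_i$ and every edge stabiliser is trivial. For each $i$ there is a distinguished base vertex $v_i$ with $\mathrm{Stab}(v_i)=A_i$. Restricting to $P$, the hypothesis that every boundary subgroup of $P$ is conjugate into some $A_i$ says exactly that every boundary subgroup fixes a vertex of $T$, so they all act elliptically; and the edge stabilisers for the $P$-action are trivial, hence cyclic. Thus the hypotheses of Lemma~\ref{Lemma subtree of action} are met, and we obtain a $P$-invariant subtree $T_0\subseteq T$ all of whose edge stabilisers are non-trivial. Since these stabilisers are inherited from $T$ and are therefore trivial, $T_0$ must contain no edge at all. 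So $T_0$ is a single vertex $v$, $P$ fixes $v$, and $P\leq\mathrm{Stab}(v)=gA_ig^{-1}$ for suitable $g$ and $i$. This is the first assertion.

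For the strengthened conclusion, I would exploit the triviality of edge stabilisers a second time, via the following elementary remark: in any action on a tree with trivial edge stabilisers, a non-trivial group element fixes at most one vertex, for otherwise it would fix the entire arc between two of its fixed vertices, in particular an edge. Now pick a non-trivial $h\in P\cap A_j$. Being in $A_j$, the element $h$ fixes the base vertex $v_j$; being in $P$, it fixes the vertex $v$ produced above. The uniqueness statement then forces $v=v_j$, and consequently $P\leq\mathrm{Stab}(v_j)=A_j$.

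The main obstacle is really just conceptual: recognising that the natural free-product tree is the right object to plug into Lemma~\ref{Lemma subtree of action}, and noticing that triviality of edge stabilisers does two different jobs (collapsing $T_0$ to a point, and pinning down the fixed vertex of $h$). The surface-theoretic content is entirely absorbed into the cited lemma, so once $T$ is chosen no further geometric argument is required.
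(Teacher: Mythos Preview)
Your argument is correct and follows essentially the same route as the paper: both use the Bass--Serre tree of the free product, check the hypotheses of Lemma~\ref{Lemma subtree of action}, and derive a contradiction from the triviality of edge stabilisers to conclude that $P$ is elliptic. For the second assertion the paper simply says it is ``an immediate consequence of the free product structure''; your tree-based argument (a nontrivial element fixes a unique vertex when edge stabilisers are trivial) is exactly the unpacking of that sentence, so the two proofs coincide.
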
 
\begin{proof}
In this situation, we know that $P$ acts on $T$, the tree associated to the free product $A_1\ast\ldots\ast A_k$, in a way that all boundary subgroups act elliptically and all edge stabilisers are trivial. So all conditions of the preceding lemma are fulfilled and we know that there is a subtree $T_0$ which is invariant under the action. If $P$ cannot be conjugated into one of the factors, this subtree cannot be trivial, so it contains at least one edge of $T$. Furthermore, the stabiliser of this edge has to be infinite cyclic which is a contradiction. Hence we know that $P\leq A_i^x$ for some $x\in A_1\ast\ldots\ast A_k$. The second part is an immediate consequence of the free product structure.
\end{proof}

\begin{lem}
\label{acylindricity}
Let $(\mathbb{G,}\Gamma)$ be a graph of groups with surfaces decomposition of a group $G$ that comes from a hyperbolic floor structure $(G, G',r)$ 
and let $T$ be the corresponding tree. Then the canonical action of $G$ on $T$ is \emph{1-acylindrical around surface type vertices}. That is, no element $g\in G\bsl \set[1]$ fixes more than one non-surface type vertex.
\end{lem}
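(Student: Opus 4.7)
The plan is a proof by contradiction combined with a Bass-Serre argument at a surface type vertex separating the two would-be fixed non-surface vertices. Suppose $g\in G\bsl \set[1]$ fixes two distinct non-surface type vertices $v_1,v_2$ of $T$. Then $g$ fixes the entire geodesic $[v_1,v_2]$ pointwise. Condition (3) of Definition \ref{Definition hyperbolic floor} says that every edge of $\Gamma$, and hence of $T$, joins a surface type vertex to a non-surface type vertex, so $T$ is bipartite with respect to this partition and the geodesic strictly alternates in type. Since both of its endpoints are non-surface, it must contain in its interior a surface type vertex $w$, flanked by two distinct edges $\epsilon_1,\epsilon_2$ of $T$, all three of them fixed by $g$.

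Next I would analyse the stabiliser structure around $w$. The vertex stabiliser $P:=\mathrm{Stab}_G(w)$ is a conjugate of a surface type vertex group, so by Definition \ref{Definition graph of groups with surfaces} it is isomorphic to $\pi_1(\Sigma)$ for a surface $\Sigma$ with non-empty boundary; in particular $P$ is a free group. The edge stabilisers $C_i:=\mathrm{Stab}_G(\epsilon_i)$ are $P$-conjugates of images of maximal boundary subgroups of $\pi_1(\Sigma)$, hence they are maximal cyclic subgroups of the free group $P$. Using the fact that in a free group two distinct maximal cyclic subgroups intersect trivially, the desired contradiction $g\in C_1\cap C_2=\set[1]$ will follow once I have shown $C_1\neq C_2$.

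Establishing the distinctness $C_1\neq C_2$ is the main obstacle. I would split into two subcases according to where $\epsilon_1,\epsilon_2$ project under the quotient map $T\to\Gamma$. If they project to different edges of $\Gamma$, then by the bijection clause in Definition \ref{Definition graph of groups with surfaces} the corresponding maximal boundary subgroups of $\pi_1(\Sigma)$ are not $P$-conjugate, and in particular $C_1$ and $C_2$ are distinct. If instead $\epsilon_1,\epsilon_2$ project to the same edge $e\in E(\Gamma)$, then $C_1$ and $C_2$ are two $P$-conjugates of a single maximal boundary subgroup of $P$; an equality $C_1=C_2$ would require the conjugating element to lie in the normaliser of a maximal cyclic subgroup of the free group $P$. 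This normaliser is the subgroup itself, so the two edges of $T$ above $e$ would already correspond to the same coset, forcing $\epsilon_1=\epsilon_2$ and contradicting the choice of the geodesic. Either way $C_1\neq C_2$, which completes the argument.
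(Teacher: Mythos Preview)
Your proposal is correct and follows essentially the same route as the paper's proof: both locate a surface type vertex on the geodesic with two distinct fixed adjacent edges, pass to the stabiliser $P\cong\pi_1(\Sigma)$, and derive a contradiction from the intersection of two (conjugates of) maximal boundary subgroups, splitting into the cases where the two edges have the same or different images in $\Gamma$. The only cosmetic difference is packaging: you phrase the free-group facts as ``distinct maximal cyclic subgroups intersect trivially'' and ``the normaliser of a maximal cyclic subgroup is itself'', whereas the paper argues directly that $C_e^{p}\cap C_e\neq\{1\}$ forces $C_e^{p}=C_e$ and then matches cosets; the content is identical.
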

\begin{proof}

If we have $g\in G$ that fixes two non-surface type vertices, it also fixes the shortest path between them. Since every edge of $T$ joins a surface type vertex to a non-surface type vertex, this means that $g$  fixes a segment of the tree consisting of a surface type vertex and two different edges adjacent to it. Suppose this surface type vertex is given by the coset $hP$ of the surface type vertex group $P$. With this notation, the element $g'{\coloneqq}h^{-1}g h$ fixes the vertex $(1\cdot)P$ and two different edges adjacent to it. 
Inspecting the structure of $T$, one can see that this implies that $g'\in C_e^{p_e}\cap C_{e'}^{p_{e'}}$ where $e,e'\in E(\Gamma)$ are edges in $\Gamma$ which are adjacent to the vertex corresponding to $P$.  The groups $C_e, C_{e'}\leq P$ are the maximal boundary subgroups corresponding to those edges and $p_{e},p_{e'}\in P$ are elements of the  surface group $P$. Since $g'\not=1$, this means that 
\begin{equation*}
C_e^{p_e p_{e'}^{-1}}\cap C_{e'}\not= \set[1].
\end{equation*}
Now there are two possibilities to consider:
If $e=e'$, we have $C_e^{p}\cap C_{e}\not= \set[1]$ for maximal cyclic subgroups $C_e$ and an element $p$ of $P$. As $P$ is a free group and maximal cyclic subgroups of free groups are malnormal, this implies that $C_e^{p}= C_{e}$. This contradicts the assumption that the two edges in $T$ that are fixed by $g$ are distinct.

 If on the other hand $e\not= e'$, we can conclude that the maximal boundary subgroups $C_e$ and $C_{e'}$ are conjugate in $P$. This is a contradiction, as the definition of a graph of groups with surfaces demands that different edges correspond to different conjugacy classes of maximal boundary subgroups.
\end{proof}

With this we can prove the following theorem which we want to use to construct examples of maximal free ground floors in Section \ref{Section build the towers}. We denote by $F_n$ the free group in $n$ generators.

\begin{thm}
\label{maximaltowers}
Suppose that a finitely generated group $G$ admits a hyperbolic tower structure over $H\cong F_n$ with the associated sequence of subgroups $G=G_0\geq G_1 \geq \ldots\geq G_m=H$ subject to the following conditions:
\begin{enumerate}
\item The graph of groups corresponding to the floor $(G_{i}, G_{i+1},r_i)$ consists of two vertices. One of them has vertex group $G_{i+1}$, the other one is a surface type vertex with vertex group $P_i{\coloneqq}\pi_1(\Sigma_i)$.
\item For all $i$, the surface $\Sigma_i$ is either a once punctured torus, a four times punctured sphere or a thrice punctured projective plane. 
\end{enumerate}
Then $H$ is a maximal free ground floor in $G$.
\end{thm}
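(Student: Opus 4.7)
My plan is to argue by contradiction, assuming $G$ also admits a hyperbolic tower structure over some free subgroup $K = H \ast H'$ with $H' \ne 1$, and to induct on the height $m$ of the given tower.

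The base case $m = 0$ is immediate: then $G = G_m = H$, so any $K \leq G$ with $H$ as a free factor must equal $H$. For the inductive step, the given top floor presents $G$ as $\pi_1(\mathbb{G}_0, \Gamma_0)$ with Bass-Serre tree $T_0$ having two vertex orbits, one with stabiliser $G_1$ and one with stabiliser $P_0 = \pi_1(\Sigma_0)$. The hypothetical $K$-tower provides a second graph of groups decomposition of $G$ with Bass-Serre tree $T'$; by the definition of a hyperbolic tower, $K$ sits inside a non-surface vertex group of the top floor of the $K$-tower, so $K$ (and hence $H$) acts elliptically on $T'$.

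The main step is to compare the two decompositions by studying the action of $P_0$ on $T'$. The edge stabilisers of this action are cyclic (since they are subgroups of the cyclic edge groups in the $K$-tower's top floor), and the maximal boundary subgroups of $\Sigma_0$ are cyclic subgroups of $P_0$ lying inside the edge groups of the given floor, hence inside $G_1$; one argues using the constraint that both towers exist on $G$ that these boundary subgroups must be elliptic in $T'$. Then Lemma~\ref{Lemma subtree of action} produces a $P_0$-invariant subtree on which all edge stabilisers are infinite cyclic, and Lemma~\ref{indecomposabilitysurfacegroups} together with a case analysis on the three allowed surfaces forces $P_0$ to be elliptic in $T'$, i.e.\ conjugate into a vertex stabiliser. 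Using the 1-acylindricity of the given tower (Lemma~\ref{acylindricity}) to prevent collapsing, one concludes that the $K$-tower's top floor must contain a surface-type vertex whose group agrees (up to conjugacy) with $P_0$. Stripping off this matching top floor from both towers yields a $K$-tower on $G_1$ sitting alongside the induced height-$(m-1)$ given tower on $G_1$, and the inductive hypothesis delivers the contradiction.

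The hardest step is to establish the rigidity of the top floor: although $\pi_1(\Sigma_0)$ has nontrivial cyclic splittings (e.g.\ the once-punctured torus decomposes along any non-separating simple closed curve), the crucial point is that these splittings produce pieces which are neither once-punctured tori nor surfaces of Euler characteristic at most $-2$, so they cannot appear as surfaces of any hyperbolic floor. Translating group-theoretic splittings arising in the $K$-tower into topological decompositions on each of the three candidate surfaces via Lemma~\ref{dual decomposition}, and handling the finite covers of $\Sigma_0$ alluded to in the acknowledgements, is the technical heart of the argument.
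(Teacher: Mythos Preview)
Your overall strategy---compare the two towers via Bass--Serre theory and exploit the Euler-characteristic constraint on the allowed surfaces---is the right one and matches the paper's. But the direction you chose for the induction creates a genuine gap.

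You work with the \emph{top} surface group $P_0$ of the given tower and study its action on the Bass--Serre tree $T'$ of the $K$-tower. To invoke Lemma~\ref{dual decomposition} you need the maximal boundary subgroups of $P_0$ to be elliptic in $T'$; you write that ``one argues using the constraint that both towers exist'' that this holds, but no such argument is available. The boundary subgroups of $P_0$ are conjugate into $G_1$, and $G_1$ has no privileged relationship to the $K$-tower: elements of $G_1$ (for instance Bass--Serre generators coming from lower floors of the given tower, exactly as in the examples of Section~\ref{Section build the towers}) may perfectly well act hyperbolically on $T'$. Your ``stripping off the top floor'' step has the same defect: even if $P_0$ happened to coincide with a surface group of the $K$-tower's top floor, the non-surface vertex group there need not equal $G_1$, so you do not obtain a $K$-tower on $G_1$ to which to apply the inductive hypothesis.

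The paper avoids this by starting from the \emph{bottom} surface $P_{m-1}$. Its boundary subgroups are conjugate into $G_m = H$, and since $H \leq K$ sits in a non-surface vertex group of every floor of the $K$-tower by definition, these boundary subgroups are elliptic in every $T'_j$ for free. One then descends through the floors of the $K$-tower: either $P_{m-1}$ drops all the way into $G'_l = H \ast F \ast S_1 \ast \ldots \ast S_p$, where Lemma~\ref{indecomposabilitysurfacegroups} (which requires a genuine free product and therefore applies only at this bottom level, not higher up as your sketch suggests) gives a contradiction, or some subsurface group $\pi_1(\Sigma'')$ lands in a surface group $P'_j$ of the $K$-tower. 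A finite-index and covering-space argument using $\chi(\Sigma_{m-1}) \in \{-1,-2\}$ together with the boundary-component count then forces $P'_j = P_{m-1}$; the $K$-tower is reordered so that this floor sits at the bottom, and one repeats with $P_{m-2}$. The process terminates with $G = G \ast F \ast S_1 \ast \ldots \ast S_p$, whence $H' \leq F = 1$.
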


\begin{proof}
Suppose that $G$ admits a second tower structure over a free subgroup $K=H\ast H'$ and take the associated sequence of subgroups to be $G=G'_0\geq G'_1 \geq \ldots\geq G'_l\geq K$.
After dividing the graph of groups decompositions of this tower structure, we may assume that for each hyperbolic floor in this structure, the associated graph of groups has only one surface type vertex. We denote by $T_j'$ the associated tree of the graph of groups decomposition corresponding to the floor $(G'_j, G'_{j+1}, r'_j)$.

First look at the top floor $(G'_0=G, G'_1, r'_{0})$. As $P_{m-1}$, the surface group that comes with the ground floor of the first tower, is a subgroup of $G$, it acts on $T_0'$. Every maximal boundary subgroup of $P_{m-1}$ can be conjugated into $H\leq G'_1$ by its corresponding Bass-Serre element and thus acts elliptically. Therefore, this action induces a splitting of $P_{m-1}$ that is by Lemma \ref{dual decomposition} dual to a set $\mathcal{C}$ of disjoint simple closed curves on $\Sigma_{m-1}$. We can assume this set of curves to be essential, i.e. no component of $\Sigma_{m-1}\bsl\cup\, \mathcal{C}$ is homeomorphic to a disc with one or no puncture.
This means that $\Sigma_{m-1}$ is decomposed into subsurfaces $(\Sigma''_k)_k$ whose fundamental groups all act elliptically on $T_0'$. 

Assume that for all $k$, the fundamental group $\pi_1(\Sigma''_k)$ stabilises a non-surface type vertex of $T_0'$. Whenever $\Sigma''_{k_1}$ and $\Sigma''_{k_2}$ are adjacent pieces of $\Sigma''$, the intersection of their fundamental groups is non-trivial. Each element contained in this intersection stabilises both the non-surface type vertex stabilised by $\pi_1(\Sigma''_{k_1})$ and the one stabilised by $\pi_1(\Sigma''_{k_2})$. Because of acylindricity (Lemma \ref{acylindricity}), these vertices have to coincide. As the surface $\Sigma''$ is connected, this shows that in fact, all the $\pi_1(\Sigma''_k)$ stabilise the same non-surface type vertex and hence, $P_{m-1}$ is conjugate to a subgroup of $G'_1$. However, at least one boundary subgroup of $P_{m-1}$ is identified with a subgroup of $H\leq G_1'$, so using acylindricity again, one has $P_{m-1}\leq G'_1$. Thus, it acts on the next floor $(G'_1, G'_2, r'_1)$. Furthermore, if  $P_{m-1}\leq G'_j$ and $t$ is a Bass-Serre generator arising in the graph of groups associated to the hyperbolic floor $(G_{m-1},G_m,r_{m-1})$, we claim that $t\in G'_j$ as well.
Indeed, if the claim is not true, there is some $k\leq j$ such that $t\in G'_{k-1}\bsl G'_k$ but $P_{m-1}\leq G'_j \leq G'_{k}$. Since the maximal boundary subgroups of $P_{m-1}$ are glued to $H$, we know that for some maximal boundary subgroup $C$ of $P_{m-1}$, we have $C^t\leq G_{m}\leq G'_{k}$. Consequently, any non-trivial element of $C$ fixes both vertices $(1\cdot)G'_k$ and $t G'_k$ in the tree corresponding to the floor $(G'_{k-1},G'_k,r'_{k-1})$. This contradicts acylindricity.

 Iterating this process we see that either $P_{m-1}\leq G'_l$, or for some $0\leq j < l$ and a subsurface $\Sigma''$ of $\Sigma_{m-1}$, the fundamental group $\pi_1(\Sigma'')$ is not included in $G'_{j+1}$
 and thus fixes a surface type vertex of $T'_j$.

Assume $P_{m-1}\leq G'_l$. As the second tower structure of $G$ is over $K$, we know that $G'_l=H\ast F\ast S_1\ast\ldots\ast S_p$ for a free group $F$ and surface groups $S_i$. All boundary subgroups of $P_{m-1}$ can be conjugated into subgroups of $G_m=H$ by their corresponding Bass-Serre element. As all those Bass-Serre elements take part of $G_l'$ as well, Lemma \ref{indecomposabilitysurfacegroups} now implies $P_{m-1}\leq H$ which is impossible.

So we know that for some $j<l$, the group $\pi_1(\Sigma'')$ fixes a surface type vertex of $T_j'$. As $\Sigma_{m-1}$ is a once punctured torus, a four times punctured sphere or a thrice punctured projective plane and the curves dividing $\Sigma_{m-1}$ are essential, we can choose $\Sigma''$ such that it contains a boundary component of $\Sigma_{m-1}$. Now using acylindricity and changing the Bass-Serre presentation of $T_j'$, we can assume that $\pi_1(\Sigma'')\leq P_j'$ where $P_j'=\pi_1(\Sigma'_{j})$ is the surface group arising in the graph of groups decomposition of the floor $(G'_j, G'_{j+1}, r'_j)$.
 
If $\pi_1(\Sigma'')$ is an infinite index subgroup of $P_j'$, we know by \cite[Lemma 3.10]{PerThesis} that $\pi_1(\Sigma'')=C_1\ast \ldots \ast C_m\ast F$ where $F$ is a (possibly trivial) free group, each $C_j$ is a boundary subgroup of $P'_j$ and any boundary element of $P'_j$ contained in $\pi_1(\Sigma'')$ can be conjugated into one of the groups $C_j$ by an element of $\pi_1(\Sigma'')$. Because the subsurface $\Sigma''\subseteq \Sigma_{m-1}$ comes from the graph of groups decomposition corresponding to the action of $P_{m-1}$ on the tree $T_j'$, we know that $\pi_1(\Sigma'')$ embeds into $P_j'$ as a surface group with boundaries. I.e. the boundary subgroups of $\pi_1(\Sigma'')$ are given by the boundary subgroups of $P_j'$ that lie in $\pi_1(\Sigma'')$.
By Lemma \ref{indecomposabilitysurfacegroups}, this implies that $\pi_1(\Sigma'')$ has to be  included completely in a boundary subgroup of $P'_j$ which is a contradiction. 

Hence, the index $n{\coloneqq}[P_j':\pi_1(\Sigma'')]$ is finite. Now, by the study of covering spaces from topology, we know that there is a covering map $p:\Sigma''\to \Sigma'_j$ of degree $n$ such that $\chi(\Sigma'')=n\cdot\chi(\Sigma'_j)$. As $\Sigma''$ is a subsurface of a once punctured torus, a four times punctured sphere or a thrice punctured projective plane, it has Euler-characteristic $-2$ or $-1$, so the index $n$ is either $1$ or $2$.

Assume that $n=2$. This can only be true if $\Sigma''$ has Euler characteristic $\chi (\Sigma'')=-2$ and $\Sigma_{j}'$ has Euler characteristic $\chi(\Sigma_{j}')=-1$. Since we assumed that the set of curves dividing $\Sigma_{m-1}$ is essential, one can deduce that in this case, $\Sigma''=\Sigma_{m-1}$ is no proper subsurface. The only surface with Euler characteristic $-1$ allowed in a hyperbolic tower structure is a once punctured torus, so we know that $\Sigma_{j}'$ has exactly one boundary component. On the other hand, it quickly follows from the definition of a covering map that 
\begin{equation*}
1=b(\Sigma_{j}')\leq b(\Sigma_{m-1})\leq n\cdot b(\Sigma_j')=2.
\end{equation*}
As we know that $b(\Sigma_{m-1})\in\set[3,4]$, this is a contradiction.

Thus we know that $n=1$, which implies that $\Sigma'_j\cong\Sigma_{m-1}$ and $P'_j=P_{m-1}$ seen as subgroups of $G$. Reordering the floors of the second tower, we may assume that $j=l-1$ which means that $G'_{l-1}$ is derived from $G'_{l}$ by gluing $\Sigma'_{j}=\Sigma_{m-1}$ to $H\leq G'_{l}$ in the same way as in the first tower.

Continuing with the action of $P_{m-2}$ on the second tower, we can apply almost the same arguments. The only thing that one needs to think about is why $P_{m-2}\leq G'_{l-1}$ is impossible. However, in the last paragraph, we assumed that $G'_{l-1}=G_{m-1}\ast F\ast S_1\ast\ldots\ast S_p$. As all boundary subgroups of $P_{m-2}$ can be conjugated into subgroups of $G_{m-1}$, we can again apply Lemma \ref{indecomposabilitysurfacegroups} to get a contradiction.

In the end of this induction process, we see that 
\begin{gather*}
G=G'_0= G_0\ast F\ast S_1\ast\ldots\ast S_p =G\ast F\ast S_1\ast\ldots\ast S_p\, ,
\end{gather*} so in particular, $F$ is trivial and we have shown the maximality.
\end{proof}

\begin{rem}
In fact, the proof shows that the second tower can be changed into the first one by permuting floors and dividing floors with several surface vertices into floors with only one surface vertex each. So up to those changes, there is only one hyperbolic tower structure of $G$ over $H$.
\end{rem}

\subsection{Model theoretic formulation}
The motivation to look at maximal free ground floors comes from the next statement: 
\begin{fact}[{\cite[Theorem 7.1]{RizosHohogeneity}}]
\label{independent realizations because of tower}
Let $G$ be a non-abelian finitely generated group. Then $k$ elements $u_1,\ldots ,u_k$ of $G$ form an independent set of realisations of $p_0$ if and only if $H{\coloneqq}\langle u_1,\ldots ,u_k \rangle\leq G$ is free of rank $k$ and $G$ admits a hyperbolic tower structure over $H$.
\end{fact}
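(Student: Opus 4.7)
The plan is to prove the two implications separately, with the reverse direction following from elementarity of tower inclusions and the forward direction going by induction on $k$.

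For the direction assuming the existence of a tower, suppose $H = \langle u_1, \ldots, u_k\rangle$ is free of rank $k$ and $G$ admits a hyperbolic tower structure over $H$. The starting point is Sela's theorem that the inclusion of a ground floor into a tower is elementary, so $H \preceq G$. (When $H$ is cyclic the conclusion $H\preceq G$ fails literally, but the fourth clause of Definition \ref{Definition hyperbolic floor} is designed exactly to provide a modified elementarity statement involving $G\ast\mathbb{Z}$ and $H\ast\mathbb{Z}$ that is sufficient for the type-transfer below.) Since $(u_1,\ldots,u_k)$ is a basis of $H$, the fact recalled just after Fact \ref{trivial tower implies T_fg} says that it is a maximal independent set of realisations of $p_0$ inside $H$. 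Elementary embeddings preserve both types over $\emptyset$ and forking-independence over $\emptyset$, so the $u_i$ remain an independent set of realisations of $p_0$ in $G$.

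For the converse, I would induct on $k$. For $k=1$, a realisation $u_1$ of $p_0$ has the same type as a primitive element of a non-abelian free group; in particular $u_1$ has infinite order, so $\langle u_1\rangle$ is free of rank one. To produce a hyperbolic tower structure on $G$ over $\langle u_1\rangle$, one applies the shortening argument of Sela and Perin: choose a sequence of morphisms $F_r\to G$ sending a fixed primitive to $u_1$ and witnessing the formulas in $p_0$, pass to a limiting isometric action on an $\mathbb{R}$-tree, and read off a hyperbolic floor structure in which $\langle u_1\rangle$ sits inside a non-surface-type vertex group; iterating inside $G$ yields the full tower. For the inductive step, assume the conclusion for $k-1$. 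Then $G$ admits a tower over $H':=\langle u_1,\ldots,u_{k-1}\rangle\cong F_{k-1}$, so $H'\preceq G$. The assumption that $u_k$ is forking-independent from $u_1,\ldots,u_{k-1}$ over $\emptyset$, combined with the geometric characterisation of forking in $T_{fg}$, should force $\langle H',u_k\rangle$ to be the free product $H'\ast\langle u_k\rangle\cong F_k$ and should endow $G$ with a further hyperbolic floor whose ground floor sits above $H'\ast\langle u_k\rangle$; splicing this new floor into the existing tower produces a tower of $G$ over $\langle u_1,\ldots,u_k\rangle$.

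The main obstacle is the geometric interpretation of forking-independence invoked in the inductive step. Unlike in vector spaces or algebraically closed fields, one does not have independence for free from an algebraic pregeometry; instead one must translate the stability-theoretic statement $u_k\not\in H'$-forking into statements about morphisms $F\to G$ and their limit actions, and then verify that those actions genuinely assemble into a hyperbolic floor with the claimed ground floor. This relies on the shortening-quotient machinery and the JSJ-analysis developed in the work of Perin and Sklinos. In the present paper it is therefore more economical to import this result as a Fact from \cite{RizosHohogeneity}, since the later sections only use the statement itself together with Fact \ref{trivial tower implies T_fg}.
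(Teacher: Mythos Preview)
The paper does not give its own proof of this statement: it is imported verbatim as a Fact from \cite{RizosHohogeneity}, and only the statement is used in what follows (to deduce Corollary~\ref{Corollary maximal ground floors and p_0}). You already recognise this in your final paragraph, so there is nothing to compare your argument against here.

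That said, your outline is a reasonable summary of how the result is actually obtained in the cited source. The reverse implication via Sela's elementarity of the tower inclusion $H\preceq G$ together with Pillay's characterisation of bases as maximal independent sets of realisations of $p_0$ is correct (with the caveat you note for $k=1$). For the forward implication, the real argument in \cite{RizosHohogeneity} does not quite proceed by induction on $k$ as you suggest; rather it passes through Perin's characterisation of elementary subgroups of torsion-free hyperbolic groups as exactly the subgroups admitting a tower structure, combined with the Perin--Sklinos description of forking in $T_{fg}$. Your inductive sketch would need, at the step of ``splicing in a new floor'', precisely the JSJ/relative-tower analysis that you flag as the main obstacle, so the honest conclusion---which you draw---is that this is genuinely a black box for the purposes of the present paper.
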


This immediately implies the following corollary:

\begin{cor}
\label{Corollary maximal ground floors and p_0}
A subgroup $H\cong F_n$ of a finitely generated group $G$ is a maximal free ground floor in $G$ if and only if each basis of $H$ is a maximal independent set of realisations of $p_0$.
\end{cor}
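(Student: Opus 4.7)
The plan is to recognise that Corollary \ref{Corollary maximal ground floors and p_0} is essentially a direct translation of Fact \ref{independent realizations because of tower}, passing between two reformulations of the same object: maximality of $H$ with respect to free-factor extension on the one hand, and maximality of a basis of $H$ as an independent set of realisations of $p_0$ on the other. Both directions amount to choosing the right basis and applying the Fact.

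First I would prove the forward implication. Assume $H \cong F_n$ is a maximal free ground floor in $G$, and fix any basis $(u_1, \ldots, u_n)$ of $H$. Since $H = \langle u_1, \ldots, u_n\rangle$ is free of rank $n$ and $G$ admits a hyperbolic tower structure over it, Fact \ref{independent realizations because of tower} says the tuple $(u_1,\ldots,u_n)$ is an independent set of realisations of $p_0$. For maximality, suppose toward a contradiction that some $u_{n+1} \in G$ makes $(u_1,\ldots,u_{n+1})$ still an independent set of realisations. The Fact then tells us that $K \coloneqq \langle u_1, \ldots, u_{n+1}\rangle$ is free of rank $n+1$ with $G$ admitting a tower over $K$; since $(u_1,\ldots,u_{n+1})$ is consequently a basis of $K$, we get $K = H \ast \langle u_{n+1}\rangle$, so $H$ is a proper free factor of $K$, contradicting the maximality assumption.

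For the converse, assume every basis of $H$ is a maximal independent set of realisations of $p_0$. Fix a basis $(u_1,\ldots,u_n)$ of $H$: since it is in particular an independent set of realisations, Fact \ref{independent realizations because of tower} gives a hyperbolic tower structure of $G$ over $H$. Suppose $K = H \ast H'$ is a free subgroup of $G$ over which $G$ also admits a tower, with $H' \neq 1$. Pick a basis $(v_1,\ldots,v_m)$ of $H'$ with $m \geq 1$; then $(u_1,\ldots,u_n,v_1,\ldots,v_m)$ is a basis of the free group $K$, so by the Fact it is an independent set of realisations of $p_0$ that strictly extends $(u_1,\ldots,u_n)$, contradicting maximality of the latter. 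Hence no such $K$ exists, and $H$ is a maximal free ground floor.

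The only delicate point is the bookkeeping that a tuple extending a basis of $H$ to a basis of a strictly larger free group $K$ witnesses the free-factor relation $K = H \ast H'$, and conversely that compatible bases of $H$ and $H'$ yield a basis of $K$; this is standard for free groups and is the sole piece not directly handled by Fact \ref{independent realizations because of tower}. Because that Fact already bundles together the group-theoretic (tower structure) and model-theoretic (forking independence) content, I do not anticipate any real obstacle beyond this translation.
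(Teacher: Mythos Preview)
Your proof is correct and is exactly the natural unpacking of what the paper intends: the paper simply states that the corollary ``immediately'' follows from Fact~\ref{independent realizations because of tower} and gives no further argument. Your two directions are precisely the translation the paper leaves implicit, so there is nothing to add.
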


So the generators of maximal free ground floors are exactly the analogues to bases mentioned at the end of Section \ref{Section model theory}. However, in contrast to free groups, not all such ``bases'' of a fixed non-standard model of $T_{fg}$ have the same cardinality. 
The fact that the ratios between the basis lengths of two such subgroups can even get arbitrarily large is what we will prove in the next section.

\section{Constructing such towers}
\label{Section build the towers}
In this section, we give examples of models of $T_{fg}$ that each contain  maximal free ground floors of different basis lengths. 

The ideas of this are taken from \cite[Proposition 5.1]{LPSTowers} which now can be seen as the special case $n=1$ of Theorem \ref{alternative proof for infinite weight}.
 
\subsection{A special case with pictures}
\label{2s5s}
To start with and in order to explain the idea, we will construct a group that contains one maximal free ground floor of basis length $2$ and one of basis length $5$. Doing this, we will emphasise the geometric motivation and give a more technical proof in the general case afterwards.

At first, we look at two hyperbolic floors that we will use during the construction. Let $H$ be any non-abelian group, $\Sigma$ a four times punctured sphere and $\Sigma'$ a once punctured torus. We describe the hyperbolic floors by their decompositions as graphs of groups with surfaces. In both cases, we have one non-surface type vertex with vertex group $H$ and one surface type vertex with vertex group $\pi_1(\Sigma)$ (respectively $\pi_1(\Sigma')$). As required by the definitions, the edge groups of these graphs of groups with surfaces are identified with maximal boundary subgroups.

\subsubsection{Gluing $\Sigma$ to $H$}
\label{gluing 3 holes}

We know that there is a presentation
\begin{equation*}
\pi_1(\Sigma)=\grep{s_1,s_2,s_3,s_4}{s_1s_2s_3s_4=1} ,
\end{equation*}
where the $s_i$'s are generators of non-conjugate maximal boundary subgroups of $\pi_1(\Sigma)$. As there are four conjugacy classes of maximal boundary subgroups, the  two vertices of the graph of groups we describe are connected by four edges. Thus, we get three Bass-Serre generators $t_1, t_2, t_3$. The embeddings of the edge groups into $H$ are given by identifying  
\begin{equation*}
t_1^{-1}s_1t_1=w_1,\, t_2^{-1}s_2t_2=w_1^{-1},\, t_3^{-1}s_3t_3=w_2,\, s_4=w_2^{-1}
\end{equation*} for any two non-commuting elements $w_1,w_2\in H$. The result is the group
\begin{gather*}
G{\coloneqq}\grep{H, t_1,t_2,t_3}{t_1 w_1 t_1^{-1}t_2 w_1^{-1} t_2^{-1}=[w_2,t_3]}. 
\end{gather*}
If one looks at the retraction given by
\begin{eqnarray}
\nonumber
r:G & \to & H  \\
t_1,t_2,t_3   & \mapsto & 1
\nonumber\, ,
\end{eqnarray}
$\pi_1(\Sigma)$ is sent to $\langle w_1, w_2\rangle\leq H$. Thus, the tuple $(G,H,r)$ is a hyperbolic floor.

\subsubsection{Gluing $\Sigma'$ to $H$}
\label{gluing one hole}
Writing 
\begin{equation*}
\pi_1(\Sigma')=\langle y_1, y_2, s |  [y_1,y_2]=s\rangle ,
\end{equation*}
the element $s$ is a generator of a maximal boundary subgroup. Identifying $s$ with the commutator $[w_1,w_2]$ for any non-commuting elements $w_1,w_2\in H$, we get the group
\begin{equation*}
G'{\coloneqq}\grep{ H, y_1, y_2}{ [y_1,y_2]=[w_1,w_2]}. 
\end{equation*}
By adding the retraction
\begin{eqnarray}
\nonumber
r':G' & \to & H  \\
\nonumber
y_1   & \mapsto & w_1\\
\nonumber
y_2   & \mapsto & w_2,
\end{eqnarray}
we get a hyperbolic floor $(G',H,r')$. 
\newline

We now use these two kinds of floors to construct a group with different maximal tower structures.

\begin{thm}
\label{Theorem 2 and 5}
The group 
\begin{gather*}
G{\coloneqq}
\left\langle{
\begin{array}{c|l@{\hspace{0.5ex}=\hspace{0.5ex}}l}
a_1,a_2,t_1,t_2,t_3,  & t_1 a_1 t_1^{-1}t_2 a_1^{-1} t_2^{-1}&[a_2,t_3], \\
t_4,t_5,t_6,  & t_4 a_2 t_4^{-1}t_5 a_2^{-1} t_5^{-1}&[t_1^{-1},t_6], \\
t_7,t_8,t_9 & t_7 t_1^{-1} t_7^{-1}t_8 t_1 t_8^{-1}&[t_4^{-1},t_9] \\
\end{array}} \right\rangle
\end{gather*}
is a model of $T_{fg}$ and contains maximal free ground floors of basis lengths $2$  and $5$.
\end{thm}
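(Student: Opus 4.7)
The plan is to exhibit two hyperbolic tower structures on $G$ and invoke Theorem~\ref{maximaltowers} twice: once to get a maximal free ground floor of basis length $2$, and once for one of basis length $5$; Fact~\ref{trivial tower implies T_fg} then yields that $G$ is a model of $T_{fg}$. The first tower is read directly off the presentation: each of the three relations has the shape of the four-punctured-sphere gluing of Section~\ref{gluing 3 holes}, with repeated anchor $w_1$ and other anchor $w_2$ equal, respectively, to $(a_1,a_2)$, $(a_2,t_1^{-1})$ and $(t_1^{-1},t_4^{-1})$. Placing $H_1\coloneqq\langle a_1,a_2\rangle\cong F_2$ at the bottom, one successively attaches four-punctured spheres using the Bass-Serre generators $t_1,t_2,t_3$, then $t_4,t_5,t_6$, then $t_7,t_8,t_9$, obtaining a length-$3$ hyperbolic tower $G=G_0\geq G_1\geq G_2\geq G_3=H_1$. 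On each floor the retraction $r_i$ kills the new Bass-Serre generators and sends the surface vertex group onto $\langle w_1,w_2\rangle\leq G_{i+1}$.

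The second tower arises via a Tietze transformation that converts each four-punctured-sphere relation into the once-punctured-torus shape of Section~\ref{gluing one hole}. For the $i$-th relation, introduce
\[
y_1^{(i)}\coloneqq t_{3i-2}\,w_1^{(i)}\,t_{3i-2}^{-1}, \qquad y_2^{(i)}\coloneqq t_{3i-1}\,t_{3i-2}^{-1}.
\]
A direct expansion gives $[y_1^{(i)},y_2^{(i)}]=t_{3i-2}w_1^{(i)}t_{3i-2}^{-1}t_{3i-1}(w_1^{(i)})^{-1}t_{3i-1}^{-1}$, so the $i$-th relation transforms into $[y_1^{(i)},y_2^{(i)}]=[w_2^{(i)},t_{3i}]$. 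Using the defining equations one can eliminate $a_1,t_2$ via $\mathrm{rel}_1$, $a_2,t_5$ via $\mathrm{rel}_2$, and $t_1,t_8$ via $\mathrm{rel}_3$, substituting into the anchors of the other relations. This yields a Tietze-equivalent presentation of $G$ on the $11$ generators $y_1^{(1)},y_2^{(1)},y_1^{(2)},y_2^{(2)},y_1^{(3)},y_2^{(3)},t_3,t_4,t_6,t_7,t_9$ with three relations of once-punctured-torus shape. Taking $H_2\coloneqq\langle t_3,t_4,t_6,t_7,t_9\rangle\cong F_5$ at the bottom and stacking once-punctured tori with surface groups $\langle y_1^{(3)},y_2^{(3)}\rangle$, then $\langle y_1^{(2)},y_2^{(2)}\rangle$, then $\langle y_1^{(1)},y_2^{(1)}\rangle$ produces a length-$3$ tower structure of $G$ over $H_2$.

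In both towers each floor's graph of groups has exactly two vertices (one surface, one non-surface), and the surfaces are four-punctured spheres or once-punctured tori, so hypotheses~(1) and~(2) of Theorem~\ref{maximaltowers} hold. The only non-trivial verification is that each retraction has non-abelian image on the surface vertex group, which reduces to showing that the anchor pair $(w_1,w_2)$ does not commute in the relevant subgroup. For every such anchor pair, composing with the retraction $G\to H_2=F_5$ provided by the second-tower presentation sends the pair to two non-commuting reduced words in $F_5$ (using that non-trivial conjugates of distinct basis elements of a free group generate a non-abelian subgroup), so the pair does not commute in $G$ itself. Freeness of $H_1$ and $H_2$ as subgroups of $G$ follows from the standard embedding of vertex groups into the fundamental group of a graph of groups. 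Applying Theorem~\ref{maximaltowers} to each tower gives that $H_1\cong F_2$ and $H_2\cong F_5$ are both maximal free ground floors of $G$. The main obstacle is the bookkeeping of the simultaneous Tietze change of variables across all three relations and the verification that each anchor pair projects to a non-commuting pair of reduced words in $F_5$.
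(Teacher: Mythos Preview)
Your plan is correct and lands on essentially the same second tower as the paper's: your rewritten presentation on $t_3,t_4,t_6,t_7,t_9,y_1^{(1)},\ldots,y_2^{(3)}$ is, after the relabelling $d_1=t_4^{-1},\,d_2=t_7^{-1},\,d_3=t_3,\,d_4=t_6,\,d_5=t_9$, exactly the group $G'''$ the paper reaches. The difference is purely expository. The paper factors the passage $G\to G'''$ into three explicit isomorphisms $f,f',f''$, each motivated geometrically by redecomposing a double torus with an arc (cut across the handles versus between them), and uses these intermediate presentations to read off directly that the successive anchor pairs $(a_2,t_1^{-1})$ and $(t_1^{-1},t_4^{-1})$ map to pieces of a basis, hence do not commute. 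Your route is a single simultaneous Tietze move introducing all $y_j^{(i)}$ at once and eliminating $a_1,a_2,t_1,t_2,t_5,t_8$; this is quicker to write down but pushes all the work into the final non-commutation check, which you do via the composed retraction $G\to F_5$. Both approaches invoke Theorem~\ref{maximaltowers} in the same way for both towers.

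Two small points worth tightening in your write-up. First, your phrase ``eliminate $a_1,t_2$ via $\mathrm{rel}_1$'' is slightly misleading: you are eliminating them via the \emph{defining} relations $y_1^{(1)}=t_1a_1t_1^{-1}$ and $y_2^{(1)}=t_2t_1^{-1}$ for the newly introduced generators, not via the original floor relations. Second, your non-commutation argument (``projects to a non-commuting pair in $F_5$'') is sound but should be stated carefully to avoid circularity: define the map $G\to F_5$ directly on the $11$-generator presentation by $y_1^{(3)}\mapsto t_4^{-1}$, $y_2^{(3)}\mapsto t_9$, $y_1^{(2)}\mapsto t_7^{-1}t_4^{-1}t_7$, $y_2^{(2)}\mapsto t_6$, $y_1^{(1)}\mapsto t_4^{-1}t_7^{-1}t_4^{-1}t_7t_4$, $y_2^{(1)}\mapsto t_3$, check it respects the three relations, and then compute the images of each anchor pair. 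This simultaneously shows $H_2\cong F_5$ (the map splits the inclusion) and gives the required non-commutation for both towers.
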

\begin{proof}
By Fact \ref{trivial tower implies T_fg}, the fact that $G$ contains some maximal free ground floor
already implies that $G$ has the same theory as a free group, so it suffices to describe such tower structures of $G$.

\begin{figure}
\begin{center}
\includegraphics{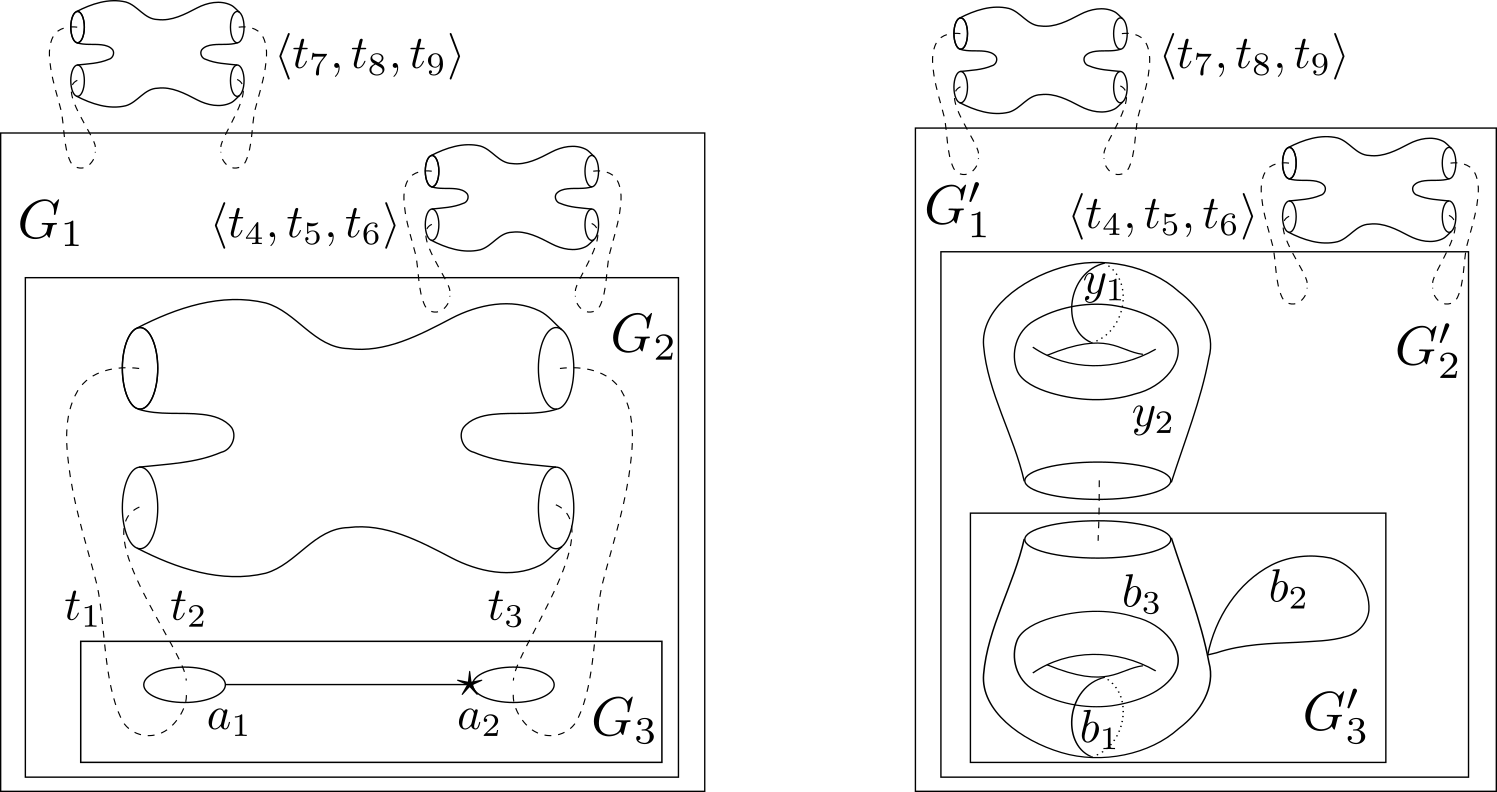}
\end{center}
\caption{The tower structures of $G$ and $G'$; on the left the basepoint $\star$ matching the isomorphism $f:G\to G'$ is marked}
\label{Firststep in example}
\end{figure}

We begin by observing that $G$ has a hyperbolic tower structure over $\langle a_1, a_2\rangle\cong F_2$ consisting of three floors of the form $G=G_0\geq G_1\geq G_2\geq G_3=\langle a_1, a_2\rangle$. It is illustrated on the left of Figure \ref{Firststep in example}.
  In all of the three floors, the corresponding graph of groups decomposition of $G_i$ consists of two vertices: one vertex with vertex group $G_{i+1}$ and one surface type vertex where the surface $\Sigma_i$ that is added is a four times punctured sphere which is glued along its boundary to $G_{i+1}$ as described in \ref{gluing 3 holes}. Firstly, $G_2=\langle a_1, a_2, t_1, t_2, t_3\rangle$ is derived from $G_3$ by gluing the boundary components of $\Sigma_2$ to $a_1, a_1^{-1}, a_2$ and $a_2^{-1}$ (i.e. choosing $w_1=a_1$ and $w_2=a_2$) and adding Bass-Serre generators $t_1, t_2$ and $t_3$ for the first three gluings. From this, $G_1= \ll a_1, a_2, t_1, \ldots ,t_6 \rr$ is derived by gluing the boundary components of $\Sigma_1$ to $a_2, a_2^{-1}, t_1^{-1}$ and $t_1$. Here, the Bass-Serre generators $t_4,t_5$ and $t_6$ are added. Lastly, we obtain $G=G_0$ from $G_1$ by gluing $\Sigma_0$ to $G_1$, identifying the sphere's maximal boundary subgroups with the groups generated by $t_1^{-1},t_1, t_4^{-1}$ and $t_4$  and adding Bass-Serre generators $t_7,t_8$ and $t_9$. So the sequence of subgroups is given by the different lines in the presentation above. 
   Although here, it is quite easy to believe that in all floors, the ``gluing points'' do not commute, this will be less obvious in the general case, so we check it now to explain how one can verify this. For the first floor $G_2\geq G_3$, it is clear that $a_1$ and $a_2$ do not commute as they form a basis of $G_3$. We will show later that the other floors fulfil this condition as well.

Clearly, the conditions of Theorem \ref{maximaltowers} are satisfied. Thus, we know that $\set[a_1,a_2]$ is a maximal independent set of realisations of $p_0$.
To get such sets of other sizes, we will step by step change the geometric interpretation of the hyperbolic floors.

At first, we observe that the first floor $G_2\geq G_3$ can be interpreted as a decomposition of a double torus with one arc. We imagine the handles of this double torus to be cut such that $G_3$ can be seen as the fundamental group of two loops connected by the arc and $G_2$ is derived from this by gluing the rest of the double torus (a four times punctured sphere) to it  (see Figure \ref{Firststep in example} on the left). On the other hand, there is another decomposition of this object that can be interpreted as a hyperbolic floor $G_2'\geq G_3'$: Here we cut the double torus between the two handles such that we gain a once punctured torus with an arc whose fundamental group is $G_3'$. What remains is another once punctured torus whose fundamental group is the surface type vertex group in this hyperbolic floor (as shown on the right of Figure \ref{Firststep in example}). This implies that $G$ admits as well a presentation $G'$ of the following form:
\begin{equation*}
G'{\coloneqq}
\left\langle{
\begin{array}{c|l}
b_1,b_2,b_3,y_1,y_2, & [y_1,y_2]=[b_1,b_3], \\
t_4,t_5,t_6, & t_4 b_1 t_4^{-1}t_5 b_1^{-1} t_5^{-1}=[b_2,t_6], \\
t_7,t_8,t_9 & t_7 b_2 t_7^{-1}t_8 b_2^{-1} t_8^{-1}=[t_4^{-1},t_9] \\
\end{array}}\right\rangle.
\end{equation*}
The isomorphism $f:G\to G'$ is given by
\begin{eqnarray}
\nonumber
f:G & \to & G'  \\
\nonumber
a_1   & \mapsto & b_2y_1b_2^{-1}\\
\nonumber
a_2   & \mapsto & b_1\\
\nonumber
t_1   & \mapsto & b_2^{-1}\\
\nonumber
t_2   & \mapsto & y_2b_2^{-1}\\
\nonumber
t_3   & \mapsto & b_3
\end{eqnarray}
and the identity on the other generators. This isomorphism $f$ sends the images of our gluing points for $\Sigma_1$ in the original tower structure of $G$ to $f(a_2)=b_1$ and $f(t_1^{-1})=b_2$. As $b_1$ and $b_2$ take part of a basis of $G'_3$, we see that $a_2$ and $t_1^{-1}$  do  not commute in $G$.

\begin{figure}
\begin{center}
\includegraphics{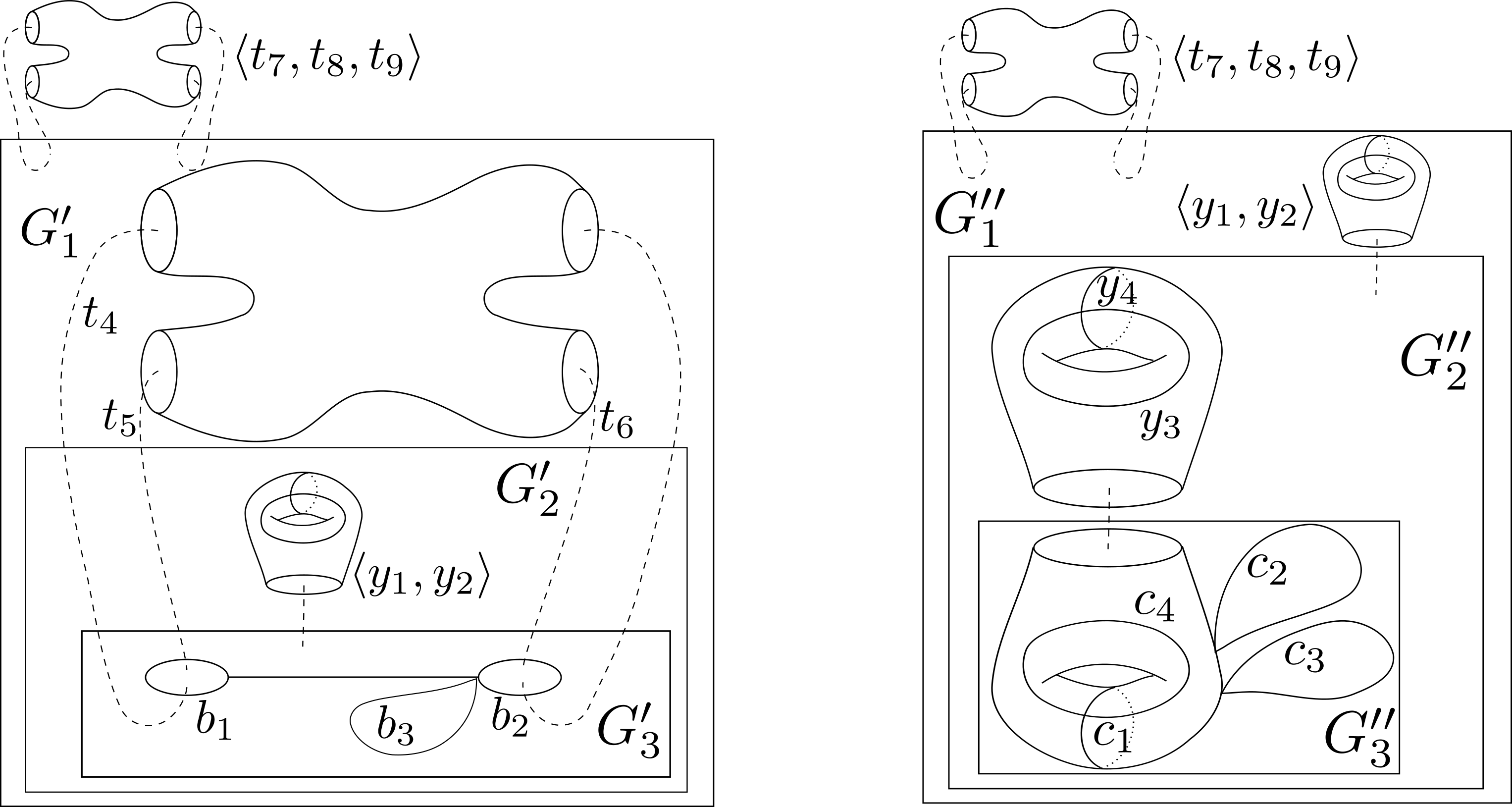}
\end{center}
\caption{The reinterpreted tower structure of $G'$ and the one of $G''$}
\label{Second step in example}
\end{figure}

Now we change the geometric interpretation of $G_3'=\ll b_1, b_2, b_3\rr$ and see it as the fundamental group of two loops which are connected by an arc and have corresponding generators $b_1$ and $b_2$ together with a third loop represented by $b_3$ (see Figure \ref{Second step in example} on the left).
Since the four times punctured sphere of the floor $G'_1\geq G_2'$ is now, similar to the case above, glued to the first two loops, we can apply the same procedure to get an isomorphism $f':G'\to G''$ onto the group
\begin{gather*}
G''{\coloneqq}
\left\langle{
\begin{array}{c|l}
c_1,c_2,c_3,c_4, y_1,y_2, & [y_1,y_2]=[c_2y_3c_2^{-1},c_3], \\
y_3,y_4, & [y_3,y_4]=[c_1,c_4], \\
t_7,t_8,t_9 & t_7 c_1 t_7^{-1}t_8 c_1^{-1} t_8^{-1}=[c_2,t_9] \\
\end{array}}\right\rangle
\end{gather*}
(see Figure \ref{Second step in example}). The images of the gluing points for $\Sigma_0$ are $f'(f(t_1^{-1}))=c_1$ and $f'(f(t_4^{-1}))=c_2$. So we see that $t_1^{-1}$ and $t_4^{-1}$ do not commute in $G$.

Doing the same reinterpretation process a third time,  we see that $G$ is isomorphic to
\begin{gather*}
G'''{\coloneqq}
\left\langle{
\begin{array}{c|l}
d_1,d_2,d_3,d_4,d_5, y_1,y_2,  & [y_1,y_2]=[d_1 y_3 d_1^{-1},d_3], \\
y_3,y_4, & [y_3,y_4]=[d_2y_5d_2^{-1},d_4], \\
y_5,y_6 & [y_5,y_6]=[d_1,d_5] \\
\end{array}}\right\rangle
.
\end{gather*}
$G'''$ now has a hyperbolic tower structure $G'''=G'''_0\geq G'''_1\geq G'''_2\geq G'''_3=\langle d_1,\ldots d_5 ,\rangle$ with
\begin{align*}
G'''_1=\langle d_1, \ldots ,d_5, y_3,y_4,y_5,y_6\rangle \leq G''',& & G'''_2=\langle d_1, \ldots ,d_5, y_5, y_6 \rangle\leq G'''.
\end{align*}
Here, for all $i$, the corresponding graph of groups decomposition of $G'''_i$ consists of two vertices, one with vertex group $G'''_{i+1}$ and one surface type vertex where the surface is a once punctured torus. All those tori are glued to the floor below as described in \ref{gluing one hole}, their gluing points are
\begin{align*}
[d_1 y_3 d_1^{-1},d_3]&\in G'''_1,&[d_2y_5d_2^{-1},d_4]&\in G'''_2&\text{and}&&[d_1,d_5]&\in G'''_3.
\end{align*}
The only thing that one has to check is whether all those commutators are non-trivial. But this can be shown in the same way as it was done for the gluing points of the four times punctured spheres. 

Hence, Theorem \ref{maximaltowers} tells us that $\langle d_1,\ldots ,d_5\rangle$ is a maximal free ground floor in $G'''$ and taking its preimage, we find such a subgroup in $G$, too.
\end{proof}

\subsection{The general case}
Now, we generalise the result of the last subsection to arbitrarily large ratios between the basis lengths of the ground floors. We start with the following technical proposition:

\begin{prop}
The group
\label{presentations}
\[
\sbox0{\ensuremath{
\begin{array}{c|c}
a_1,a_2,t_1,t_2,t_3, & t_1 w_1 t_1^{-1}t_2 w_1^{-1} t_2^{-1}=[w_2,t_3], \\
t_4,t_5,t_6, & t_4 w_3 t_4^{-1}t_5 w_3^{-1} t_5^{-1}=[w_4,t_6], \\
\vdots& \vdots \\
t_{3n-2},t_{3n-1},t_{3n} & t_{3n-2} w_{2n-1} t_{3n-2}^{-1}t_{3n-1} w_{2n-1}^{-1} t_{3n-1}^{-1}=[w_{2n},t_{3n}] \\
\end{array}}}
\mathopen{G^n{\coloneqq} \resizebox{1.2\width}{1.1\ht0}{$\Bigg\langle$}}
\raisebox{2pt}{\usebox{0}}
\mathclose{\resizebox{1.2\width}{1.1\ht0}{$\Bigg\rangle$}}
\]
with 
\begin{gather}
w_1{\coloneqq}a_1,\, w_2=w_3{\coloneqq}a_2, \nonumber \\
w_{2i+2}=w_{2i+3}{\coloneqq}t^{-1}_{3i-2} \text{ for $i\geq 1$,}
 \nonumber
\end{gather}
admits a presentation of the form
\[
\sbox0{\ensuremath{
\begin{array}{c|c}
e_1,\ldots , e_{n+2}, y_1,y_2, & [y_1,y_2]=[w'_1,w'_2], \\
y_3,y_4, & [y_3,y_4]=[w'_3,w'_4], \\
\vdots  & \vdots \\
y_{2n-1},y_{2n} & [y_{2n-1},y_{2n}]=[w'_{2n-1},w'_{2n}] \\
\end{array}}}
\mathopen{{\tilde{G}^n}= \resizebox{1.2\width}{1.1\ht0}{$\Bigg\langle$}}
\raisebox{2pt}{\usebox{0}}
\mathclose{\resizebox{1.2\width}{1.1\ht0}{$\Bigg\rangle$}}
\]
where for all $1\leq j\leq n$, the words $w'_{2j-1}$ and $w'_{2j}$ are 
elements in the subgroup generated by $e_1,\ldots ,e_{n+2},y_{2j+1},\ldots , y_{2n} $.
\end{prop}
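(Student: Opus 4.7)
My plan is to prove the proposition by iteratively applying the Tietze transformation from the $n=1$ case (carried out in Section \ref{2s5s}) to each of the $n$ four-times-punctured sphere relations of $G^n$ in turn. Recall that this \emph{local trick} replaces the relation $t_1 a_1 t_1^{-1} t_2 a_1^{-1} t_2^{-1} = [a_2, t_3]$ by $[y_1, y_2] = [\varepsilon_1, \varepsilon_3]$ via the substitutions $a_1 = \varepsilon_2 y_1 \varepsilon_2^{-1}$, $a_2 = \varepsilon_1$, $t_1 = \varepsilon_2^{-1}$, $t_2 = y_2 \varepsilon_2^{-1}$, $t_3 = \varepsilon_3$.

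First I would apply this local trick to the first relation $R_1$ of $G^n$, which introduces three new base generators and the pair $y_{2n-1}, y_{2n}$. The crucial observation is that the same substitution simultaneously transforms the second relation: since $w_3 = a_2$ and $w_4 = t_1^{-1}$ are replaced by the new generators $\varepsilon_1$ and $\varepsilon_2$ respectively, the second relation becomes
\[
t_4 \varepsilon_1 t_4^{-1} t_5 \varepsilon_1^{-1} t_5^{-1} = [\varepsilon_2, t_6],
\]
which has exactly the shape of the $n=1$ relation (with $\varepsilon_1, \varepsilon_2, t_4, t_5, t_6$ now playing the roles of $a_1, a_2, t_1, t_2, t_3$). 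Applying the local trick a second time converts $R_2$ into a commutator relation, introduces two new $y$-generators, and adds only one new base generator — the other two being identified with $\varepsilon_1$ and $\varepsilon_2$, which are already present. Iterating $n$ times exhausts all floors, producing exactly $n+2$ base generators and $2n$ $y$-generators, matching the cardinality of $\tilde{G}^n$.

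To verify the claimed form of the words $w'_{2j-1}, w'_{2j}$, I would induct on the iteration step $k$ and track how the substitution at step $k$ modifies the relations already produced at earlier steps. The key point is that at step $k$ the ``LHS variable'' (which was the ``RHS variable'' newly introduced at step $k-1$) gets replaced by a conjugation $\beta_k\, y_{2k-1}\, \beta_k^{-1}$ for a new base generator $\beta_k$. Hence each previously produced commutator relation picks up precisely one higher-index $y$-generator, appearing as a conjugation term inside one entry of the commutator. After relabeling so that the relation obtained at step $k$ becomes the $(n-k+1)$-th relation of $\tilde{G}^n$, this translates to $w'_{2j-1}, w'_{2j}$ involving only $e_1, \ldots, e_{n+2}$ together with $y_{2j+1}, \ldots, y_{2n}$, as required.

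The main obstacle is the bookkeeping across the $n$ iterative Tietze transformations. The underlying geometric picture — folding a four-punctured sphere whose two boundary pairs are each glued to the same loop of the base into a once-punctured torus attached to the base — makes the scheme conceptually clear, and the $n=3$ instance is already spelled out in the proof of Theorem \ref{Theorem 2 and 5}. What requires care is the algebraic verification of the generator renamings: which $y$-generators appear inside which final commutator, and why no extra ones creep in. Once this is cleanly organised, the structure of $\tilde{G}^n$ stated in the proposition falls out directly.
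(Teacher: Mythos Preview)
Your approach is essentially the same as the paper's: the paper also iterates the ``local trick'' $n$ times, packaging each step as an explicit isomorphism $f^{(i)}\colon G^{(i)}\to G^{(i+1)}$ between intermediate presentations, and then verifies the condition on $w'_{2j-1},w'_{2j}$ by computing $f_{j-1}(w_{2j})=a_1^{(j)}$ so that only the maps $f^{(j)},\ldots,f^{(n-1)}$ can introduce $y$-letters into the $j$-th commutator. Two small bookkeeping points to clean up: your $y$-indexing is inconsistent (you say step~$1$ introduces $y_{2n-1},y_{2n}$ but later that step~$k$ introduces $y_{2k-1}$), and in fact the paper's indexing needs no final relabeling; also, not \emph{every} previously produced relation acquires a new $y$ at each step---the $a_1^{(i)}$- and $a_2^{(i)}$-occurrences alternate, so a given relation picks up a new $y$ only at every second step---though this does not affect the conclusion.
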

\begin{proof}
We give a sequence of isomorphisms $(f^{(i)}:G^{(i)}\to G^{(i+1)})_{0\leq i\leq n-1}$ where $G^{(0)}=G^n$, $G^{(n)}=\tilde{G}^n$ and, for $0< i < n$, the group $G^{(i)}$ is defined by the following presentation:
\[
\sbox0{${
\begin{array}{c|c}
a^{(i)}_1,\ldots , a^{(i)}_{i+2}, y_1,y_2,  & [y_1,y_2]=[f_{i-1}(w_2),a^{(i)}_3], \\
y_3,y_4, & [y_3,y_4]=[f_{i-1}(w_4),a^{(i)}_4], \\
\vdots & \vdots \\
y_{2i-1},y_{2i},& [y_{2i-1},y_{2i}]=[f_{i-1}(w_{2i}),a^{(i)}_{i+2}], \\
t_{3i+1},t_{3i+2},t_{3i+3},& t_{3i+1} \mathbf{a^{(i)}_1} t_{3i+1}^{-1}t_{3i+2} \mathbf{(a^{(i)}_1)^{-1}} t_{3i+2}^{-1}=[\mathbf{a^{(i)}_2},t_{3i+3}],\\
t_{3i+4},t_{3i+5},t_{3i+6}, & t_{3i+4} \mathbf{a^{(i)}_2} t_{3i+4}^{-1} t_{3i+5} \mathbf{(a^{(i)}_2)^{-1}} t_{3i+5}^{-1}=[\mathbf{t^{-1}_{3i+1}},t_{3i+6}],\\
t_{3i+7},t_{3i+8},t_{3i+9}, &
t_{3i+7} \mathbf{t^{-1}_{3i+1}} t_{3i+7} t_{3i+8} \mathbf{t_{3i+1}} t_{3i+8}^{-1}=[\mathbf{t_{3i+4}^{-1}},t_{3i+9}],\\
\vdots & \vdots \\
t_{3n-2},t_{3n-1},t_{3n} & t_{3n-2} w_{2n-1} t_{3n-2}^{-1}t_{3n-1} w_{2n-1}^{-1} t_{3n-1}^{-1}=[w_{2n},t_{3n}]
\end{array}}$
}
\mathopen{\resizebox{1.2\width}{1.13\ht0}{$\Bigg\langle$}}
\raisebox{9pt}{\usebox{0}}
\mathclose{\resizebox{1.2\width}{1.13\ht0}{$\Bigg\rangle$}}
\]
where $f_i{\coloneqq}f^{(i)}\circ f^{(i-1)}\circ\ldots \circ f^{(0)}$ and bold letters mark some images of the $w_j$'s that are important to understand this step.
The isomorphisms are defined by
\begin{eqnarray}
\nonumber
f^{(i)}:G^{(i)}&\to &G^{(i+1)}  \\
\nonumber
a^{(i)}_1   & \mapsto & a^{(i+1)}_2 y_{2i+1} (a^{(i+1)}_2)^{-1}\\
\nonumber
a^{(i)}_2   & \mapsto & a^{(i+1)}_1\\
\nonumber
a^{(i)}_3   & \mapsto & a^{(i+1)}_3\\
\nonumber
&
\vdots&\\
\nonumber
a^{(i)}_{i+2}   & \mapsto & a^{(i+1)}_{i+2}\\
\nonumber
t_{3i+1}   & \mapsto & (a^{(i+1)}_2)^{-1}\\
\nonumber
t_{3i+2}   & \mapsto & y_{2i+2} (a^{(i+1)}_2)^{-1}\\
\nonumber
t_{3i+3}   & \mapsto & a^{(i+1)}_{i+3}
\end{eqnarray}
and the identity on the remaining generators. (In the cases $i=0$ respectively $i=n-1$, we take $a^{(0)}_j{\coloneqq}a_j$ and $a^{(n)}_j{\coloneqq}e_j$.)
Since we find a preimage for every ge\-ne\-ra\-tor of $G^{(i+1)}$, the map $f^{(i)}$ is surjective. We have
\begin{gather}
f^{(i)}(t_{3i+1} a^{(i)}_1 t_{3i+1}^{-1}t_{3i+2} (a^{(i)}_1)^{-1} t_{3i+2}^{-1})=[y_{2i+1},y_{2i+2}], \nonumber\\
f^{(i)}(t_{3i+1}^{-1})=(a^{(i+1)}_2),\nonumber
\end{gather}
and $f^{(i)}$ fixes all $t_j$ with $j>3i+3$. This shows that each relation in $G^{(i+1)}$ corresponds to exactly one relation in $G^{(i)}$, which one can use to show that $f^{(i)}$ is a well-defined homomorphism and injective. 

Defining 
\begin{align*}
w'_{2j}&{\coloneqq}f_{n-1}(t_{3j})=e_{j+2}  ,\\
w'_{2j-1}&{\coloneqq}f_{n-1}(w_{2j}) ,
\end{align*}
it follows that $f_{n-1}=f^{(n-1)}\circ f^{(n-2)}\circ\ldots \circ f^{(0)}$ is an isomorphism between $G^n$ and $\tilde{G}^n$.

It remains to show that $w'_{2j-1}$ and $w'_{2j}$ lie in the subgroup generated by $e_1, \ldots , e_{n+2}$, $y_{2j+1}, \ldots , y_{2n}$. A short computation shows that $f_{j-1}(w_{2j})
=a_1^{(j)}$, so the smallest index of any instance of $y_k$ appearing in $f_{n-1}(w_{2j})$ is $k=2j+1$. As we already know that $w'_{2j}=e_{j+2}$, this finishes the proof.
\end{proof}

Using this proposition, we can finally show the following which proves Theorem \ref{Theorem differences in sizes}:
\begin{thm}
\label{alternative proof for infinite weight}
The group $G^n$ as defined in Proposition \ref{presentations} is a model of $T_{fg}$ that contains maximal free ground floors of basis lengths $2$ and $n+2$.
\end{thm}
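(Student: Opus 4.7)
The plan is to follow the template of Theorem~\ref{Theorem 2 and 5} and exhibit two tower structures on $G^n$---one over $\langle a_1,a_2\rangle\cong F_2$ and one over $\langle e_1,\ldots,e_{n+2}\rangle\cong F_{n+2}$---and then apply Theorem~\ref{maximaltowers} to each. For the first tower, I read the presentation of $G^n$ from the bottom relation upward: set $G_n:=\langle a_1,a_2\rangle$ and, for $k=n,n-1,\ldots,1$, construct $G_{k-1}$ from $G_k$ by gluing a four-times-punctured sphere exactly as in Section~\ref{gluing 3 holes}, with gluing points $w_{2k-1},w_{2k}$ (which lie in $G_k$ by the recursive definition of the $w_j$'s) and Bass-Serre generators $t_{3k-2},t_{3k-1},t_{3k}$. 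Each retraction $r_{k-1}\colon G_{k-1}\to G_k$ sends these three Bass-Serre generators to $1$ and thus sends the surface-type vertex group onto $\langle w_{2k-1},w_{2k}\rangle$.

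For the second tower, I invoke Proposition~\ref{presentations} to get the isomorphism $G^n\cong\tilde{G}^n$, whose $n$ relations have the form $[y_{2j-1},y_{2j}]=[w'_{2j-1},w'_{2j}]$ with $w'_{2j-1},w'_{2j}\in\langle e_1,\ldots,e_{n+2},y_{2j+1},\ldots,y_{2n}\rangle$. This nesting is precisely what permits a bottom-up tower with $G'_n:=\langle e_1,\ldots,e_{n+2}\rangle$ as ground floor: for $j=n,n-1,\ldots,1$, build $G'_{j-1}$ from $G'_j$ by gluing a once-punctured torus as in Section~\ref{gluing one hole}, identifying its boundary subgroup with the cyclic group generated by $[w'_{2j-1},w'_{2j}]$ and defining $r'_{j-1}$ to send $y_{2j-1}\mapsto w'_{2j-1}$ and $y_{2j}\mapsto w'_{2j}$.

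The main technical step is to verify that at every floor of either tower the two gluing points do not commute in the subgroup below; otherwise the image of the surface-type vertex group under the retraction would be abelian and condition~(4) of Definition~\ref{Definition hyperbolic floor} would fail. I would argue by induction on the floor, using the isomorphisms $f^{(i)}$ built in the proof of Proposition~\ref{presentations} to carry each pair of gluing points into a presentation where non-commutativity is witnessed by the free-product structure---precisely as $(a_2,t_1^{-1})$ and $(t_1^{-1},t_4^{-1})$ were checked in the proof of Theorem~\ref{Theorem 2 and 5} by mapping them to $(b_1,b_2)$ in $G'$ and $(c_1,c_2)$ in $G''$. For the second tower the analogous verification is even more direct, since $w'_{2j}=e_{j+2}$ is itself a free generator of $G'_n$ and the explicit formulas for $f^{(i)}$ show that $w'_{2j-1}$ involves other free generators of $G'_n$ together with $y_\ell$'s of higher index. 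With the non-commutativity of all gluing points in hand, both towers satisfy the two-vertex, single-surface hypotheses of Theorem~\ref{maximaltowers} with surfaces that are four-times-punctured spheres (first tower) or once-punctured tori (second), so both $\langle a_1,a_2\rangle$ and $\langle e_1,\ldots,e_{n+2}\rangle$ are maximal free ground floors in $G^n$. That $G^n$ is a model of $T_{fg}$ then follows immediately from Fact~\ref{trivial tower implies T_fg}.
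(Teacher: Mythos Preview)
Your strategy coincides with the paper's: exhibit the tower over $\langle a_1,a_2\rangle$ built from four-times-punctured spheres, use Proposition~\ref{presentations} to exhibit the tower over $\langle e_1,\ldots,e_{n+2}\rangle$ built from once-punctured tori, verify non-commutativity of all gluing pairs via the isomorphisms $f^{(i)}$, and invoke Theorem~\ref{maximaltowers} for both.

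There is one concrete slip in your first tower: the indexing is reversed. You write that the floor from $G_k$ to $G_{k-1}$ uses gluing points $w_{2k-1},w_{2k}$ and Bass--Serre generators $t_{3k-2},t_{3k-1},t_{3k}$, and you assert these $w$'s lie in $G_k$ ``by the recursive definition''. But for $k=n$ this would require $w_{2n-1},w_{2n}\in G_n=\langle a_1,a_2\rangle$, and for $n\geq 2$ these are of the form $t_{3i-2}^{-1}$, not elements of $\langle a_1,a_2\rangle$. The \emph{first} relation of the presentation (the one with $w_1=a_1$, $w_2=a_2$) is what builds $G_{n-1}$ from $G_n$; the \emph{last} relation builds $G_0$ from $G_1$. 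In the paper's notation $G_j=\langle a_1,a_2,t_1,\ldots,t_{3(n-j)}\rangle$, and the floor $(G_j,G_{j+1})$ uses gluing points $w_{2(n-j)-1},w_{2(n-j)}$ with Bass--Serre generators $t_{3(n-j)-2},t_{3(n-j)-1},t_{3(n-j)}$. Your indexing for the second tower is correct. With this fix the argument goes through and matches the paper's proof.
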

\begin{proof}
We will describe two hyperbolic tower structures of $G^n$ over free subgroups. As in the special case of Theorem \ref{Theorem 2 and 5}, the existence of such structures immediately implies  that $G^n$ is a model of $T_{fg}$.

The first structure is over $\langle a_1, a_2\rangle\cong F_2$ and consists of $n$ floors. The associated sequence of subgroups is given by $G^n=G_0\geq G_1\geq \ldots\geq G_n=\langle a_1, a_2\rangle$ where $G_j$ is generated by $a_1,a_2,t_1,\ldots , t_{3(n-j)}$. For all floors, the corresponding graph of groups decomposition of $G_j$ consists of two vertices: one vertex with vertex group $G_{j+1}$ and one surface vertex where the surface $\Sigma_{j}$ added is a four times punctured sphere that is glued to $G_{j+1}$ as in \ref{gluing 3 holes}. That is, the maximal boundary subgroups of $\pi_1(\Sigma_j)$ are identified with $w_{2(n-j)-1},\,w_{2(n-j)-1}^{-1},\, w_{2(n-j)}$ and $w_{2(n-j)}^{-1}$, which are all elements of $G_{j+1}$. Doing so, we have to add the Bass-Serre generators $t_{3(n-j)-2},t_{3(n-j)-1}$ and $t_{3(n-j)}$. 
As in the proof of Theorem \ref{Theorem 2 and 5}, we can deduce from the proof of Proposition \ref{presentations} that $w_{2(n-j)-1}$ and $w_{2(n-j)}$ do not commute. This shows that these decompositions describe hyperbolic floors that satisfy all the conditions of Theorem \ref{maximaltowers}. Consequently, we know that $\langle a_1,a_2\rangle$ is a maximal free ground floor.

On the other hand, Proposition \ref{presentations} tells us that $G^n\cong\tilde{G}^n$ and $\tilde{G}^n$ admits a hyperbolic tower structure over $\langle e_1,\ldots,e_{n+2}\rangle\cong F_{n+2}$. Just like the first decomposition, it consists of $n$ floors where each associated graph of groups has one non-surface type vertex and one surface type vertex. Here, all the surfaces are once punctured tori denoted by $\tilde{\Sigma}_j$ and they are glued to the floors below as described in \ref{gluing one hole}. The corresponding sequence of subgroups is $\tilde{G}^n=\tilde{G}_0\geq \tilde{G}_1\geq \ldots\geq \tilde{G}_n=\langle e_1,\ldots,e_{n+2}\rangle$ where $\tilde{G}_j$ is the subgroup generated by $e_1,\ldots ,\,e_{n+2},\,y_{2j+1},\ldots ,\, y_{2n} $. In the floor $\tilde{G}_j\geq\tilde{G}_{j+1}$, a maximal boundary subgroup of $\pi_1(\tilde{\Sigma}_j)$ is identified with the commutator $[w'_{2j+1},w'_{2j+2}]$ that takes by Proposition \ref{presentations} part of  $\tilde{G}_{j+1}$. This tower satisfies all conditions of Theorem \ref{maximaltowers}, so we know that $\langle e_1,\ldots,e_{n+2}\rangle$ is a maximal free ground floor as well.
\end{proof}
\begin{rem}
In fact the proof of Proposition \ref{presentations} shows that $G^n$ even contains maximal free ground floors of all basis lengths between $2$ and $n+2$ because for each $0\leq i\leq n$, the group $G^{(i)}$ admits a hyperbolic tower over $F_{i+2}$ that fulfils the conditions of Theorem \ref{maximaltowers}.
\end{rem}

\section{Weight and Whitehead graphs}
\label{Section infinite weight}

In this last section, we want to take a closer look at the model theoretic meaning of the results presented so far and prove Theorem \ref{Theoreminfiniteweight}. These model theoretic questions were the point of departure for this article.

The analogies of forking independence to classical independence notions as linear independence or algebraic independence lead to the idea of comparing the sizes of maximal independent sets of realisations of a fixed type. That is the motivation for introducing the so-called weight of a type which bounds the ratio of the sizes of different such sets.

\begin{defi}
The \emph{preweight} of a type $p(\bar{x}){\coloneqq}tp(\bar{a}/A)$ is the supremum of the set of cardinals $\kappa$ for which there exists a set $\set[\bar{b}_i|i<\kappa]$ independent over $A$, such that $\bar{a}$ forks with $\bar{b}_i$ over $A$ for all $i$. It is denoted by $\text{prwt}(q)$. The \emph{weight} $wt(p)$ of a type $p$ is the supremum of 
\begin{equation*}
\set[\text{prwt}(q)|q \text{ a non-forking extension of }p].
\end{equation*}
\end{defi}

In fact, for every type $p$ in a theory $T$, the weight $wt(p)$ is smaller or equal to the cardinality of $T$. In our case, where we consider the countable theory $T_{fg}$ of free groups, the weight of all types is bounded by $\omega$.

The mentioned bound to the ratio of maximal independent sets is given by the following:

\begin{fact}[see \cite{MakkaiStability} and {\cite[Conclusion V.3.13]{ShelahClass}}]
\label{fact about weight}
Let $T$ be a complete theory. Suppose $p$ is a type in $T$ such that $wt(p)\leq n$ for a natural number $n\in\mathbb{N}$. Then we can find no model of $T$ in which there exist two maximal independent sets of realisations of $p$, such that one has size $k$ while the other one has size greater than $k\cdot n$. 
\end{fact}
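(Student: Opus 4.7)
The plan is to prove the contrapositive by deriving a contradiction from the assumption that a model $M$ of $T$ contains two maximal independent sets of realisations of $p=tp(\bar{a}/A)$, say $I=\{\bar{a}_1,\ldots ,\bar{a}_k\}$ and $J$ with $|J|>k\cdot n$. The central tools are transitivity of non-forking together with the hypothesis $wt(p)\leq n$, combined with a pigeonhole argument.

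First I would use the maximality of $I$ and transitivity of non-forking to produce, for each $\bar{b}\in J$, a distinguished index. Since $\bar{b}$ realises $p$ and $I$ is maximal independent over $A$, we have $\bar{b}\not\mathop{\downarrow}_{A} I$. By transitivity, there is a least $i(\bar{b})\in\{1,\ldots, k\}$ with
\begin{equation*}
\bar{b}\mathop{\downarrow}\nolimits_{A}\{\bar{a}_1,\ldots ,\bar{a}_{i(\bar{b})-1}\}\quad\text{but}\quad \bar{b}\not\mathop{\downarrow}\nolimits_{A\cup\{\bar{a}_1,\ldots ,\bar{a}_{i(\bar{b})-1}\}}\bar{a}_{i(\bar{b})}.
\end{equation*}
Setting $J_i=\{\bar{b}\in J : i(\bar{b})=i\}$ partitions $J$ into $k$ classes, so by the pigeonhole principle some $J_i$ has $|J_i|>n$.

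Fix such an $i$ and put $A'=A\cup\{\bar{a}_1,\ldots ,\bar{a}_{i-1}\}$. Because $I$ is independent over $A$, we have $\bar{a}_i\mathop{\downarrow}_{A}\{\bar{a}_1,\ldots ,\bar{a}_{i-1}\}$, so $q{\coloneqq}tp(\bar{a}_i/A')$ is a non-forking extension of $p$; by definition of weight as the supremum over non-forking extensions, $wt(q)\leq wt(p)\leq n$. Every $\bar{b}\in J_i$ forks with $\bar{a}_i$ over $A'$ by the choice of $i(\bar{b})$. If one can produce a subset $J'_i\subseteq J_i$ of size $>n$ that is independent over $A'$, then the definition of preweight applied to $q$ yields $n<|J'_i|\leq \mathrm{prwt}(q)\leq wt(q)\leq n$, the desired contradiction.

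The main obstacle is precisely the extraction of this $A'$-independent subset $J'_i$. The $\bar b \in J_i$ are independent over $A$ and each is individually independent from the finite set $\{\bar a_1,\ldots ,\bar a_{i-1}\}$ over $A$, but collectively the family need not be independent over $A'$. To overcome this I would invoke standard stability-theoretic machinery: using finite character and local character of non-forking ($\kappa(T)=\aleph_0$ in stable $T$), one can find a finite subset of $J_i$ whose removal leaves the rest independent from $\{\bar a_1,\ldots ,\bar a_{i-1}\}$ over $A$, and then base change upgrades this to independence over $A'$. Equivalently, by Ramsey and compactness, extract from $J_i$ an $A$-indiscernible subsequence (a Morley sequence in $p$) and pass to a tail along which independence transfers from $A$ to $A'$. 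Either route yields a subset $J'_i$ of size exceeding $n$ that is independent over $A'$, completing the proof.
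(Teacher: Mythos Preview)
The paper does not prove this statement; it is recorded as a \emph{Fact} and attributed to Makkai and Shelah, so there is no argument in the paper to compare yours against.

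Your proof has a genuine gap precisely where you flag it, and neither proposed repair works. Pigeonhole only yields $|J_i|>n$; in the worst case $|J_i|=n+1$. The suggestion to discard a finite subset so that the remainder becomes independent from $\{\bar a_1,\ldots,\bar a_{i-1}\}$ over $A$ (and hence over $A'$) is empty for a finite set of this size: you may be forced to drop down to at most $n$ elements. The alternative of extracting an $A$-indiscernible Morley sequence and passing to a tail likewise presupposes an infinite reservoir of elements, which you do not have. The obstruction is real: already in an algebraically closed field one finds a pair $\{c_1,c_2\}$ that is independent over $\emptyset$, with each $c_l$ independent from a given $a$ over $\emptyset$, yet $\{c_1,c_2\}$ dependent over $a$ (take $c_1=a+c_2$ with $a,c_2$ algebraically independent). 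The standard argument in the cited references avoids this base-change problem by treating the tuple $\bar a_1\cdots\bar a_k$ as a whole and using (sub)additivity of weight: since $I$ is $A$-independent, each $tp(\bar a_i/A\bar a_1\cdots\bar a_{i-1})$ is a non-forking extension of $p$, so the concatenated tuple has weight at most $k\cdot n$ over $A$; as $J$ is $A$-independent of size greater than $k\cdot n$, some $\bar b\in J$ must be independent from $I$ over $A$, contradicting maximality of $I$. Proving subadditivity of weight is itself a non-trivial piece of the forking calculus, and that is where the substance of the cited references lies.
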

In particular, if $wt(p)=1$, we know that all such sets have the same size.

Bearing in mind that each basis of a maximal free ground floor forms a maximal independent set of realisations of the generic type $p_0$ (see Corollary \ref{Corollary maximal ground floors and p_0}), one can also see Theorem \ref{alternative proof for infinite weight} as a proof that $p_0$ has infinite weight. This is a fact that was already shown by Pillay (\cite{PilGenweight}) and Sklinos (\cite{Rizinfweight}). 

Extending the methods of Sklinos' proof, we now want to generalise this result to arbitrary types realised in free groups. More precisely, we show that any non-algebraic (1-)type over the empty set which is realised in a free group has infinite weight. The condition on the types to be non-algebraic is no great restriction as in $T_{fg}$, all (1-)types over the empty set but the one of the neutral element are non-algebraic.

For this, we will use the following strong answer to the Tarski problem given by Sela:
\begin{fact}[{\cite{SelaDiophantine}}]
\label{Answer Tarski problem}
For any $2\leq m\leq n$, the natural embedding of $F_m$ in $F_n$ is elementary.
\end{fact}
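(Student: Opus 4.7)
The statement is Sela's celebrated positive answer to Tarski's problem in its strong form (an equivalent result was proved independently by Kharlampovich--Myasnikov), whose full proof spans several hundred pages; the plan is therefore to outline Sela's strategy, which combines the geometric theory of limit groups with a delicate elimination of quantifiers. By the Tarski--Vaught test it suffices to prove that, for every formula $\phi(\bar x, y)$ and every tuple $\bar a$ from $F_m$, if $F_n \models \exists y\, \phi(\bar a, y)$ then some $b \in F_m$ already witnesses $\phi(\bar a, b)$ in $F_n$. So the problem reduces to understanding definable subsets of $F_n$ cut out by formulas with parameters in $F_m$.

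The first main ingredient is the Makanin--Razborov analysis: for any system of equations $\Sigma(\bar x, \bar y)$ with coefficients in $F_m$, one constructs a finite directed diagram of quotients $F_m * F(\bar y) \twoheadrightarrow L_i$ onto \emph{limit groups}, obtained by iterated application of the shortening argument to JSJ-like splittings, which parametrizes all solutions of $\Sigma$ in any group. One then shows that each terminal limit group admits a graded resolution whose strata correspond precisely to the surface, free abelian and free pieces used to build a hyperbolic tower structure over $F_m$ in the sense of Section~\ref{Section Bass Serre and towers}. The second main ingredient is the machinery of \emph{formal solutions} and \emph{test sequences}, which gives, for those limit groups arising as towers over $F_m$, a finite algebraic parametrization of their homomorphisms into $F_n$ together with sequences of homomorphisms realizing the generic behaviour of that parametrization.

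The proof then proceeds by induction on quantifier alternation. The existential case is handled directly by the diagram: an existential formula holds in $F_n$ iff its Makanin--Razborov analysis produces a terminal limit group that maps non-trivially into $F_m$, in which case a witness in $F_m$ can be extracted from a suitable test sequence. The $\forall\exists$ step, where the real difficulty lies, is reduced by the same analysis to showing that the set of tuples for which a given existential subformula holds is itself describable by a finite union of ``formal solution'' families; one then transfers validity between $F_m$ and $F_n$ by running a test sequence against a hypothetical counterexample and exploiting the fact that $F_n = F_m * F_{n-m}$ is itself a (degenerate) hyperbolic tower over $F_m$, so that no genuinely new first-order behaviour can appear. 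The main obstacle is precisely the construction and control of test sequences and the proof that the formal solution sets are closed under the first-order operations, which together constitute the bulk of Sela's series of papers culminating in \cite{SelaDiophantine}.
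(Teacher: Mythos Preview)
The paper does not prove this statement at all: it is stated as a \emph{Fact} with a citation to \cite{SelaDiophantine} and is used as a black box in the proof of Theorem~\ref{Theoreminfiniteweight}. There is therefore no ``paper's own proof'' to compare your proposal against.

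Your outline is a reasonable high-level summary of Sela's actual strategy (Tarski--Vaught reduction, Makanin--Razborov diagrams and limit groups, formal solutions and test sequences, induction on quantifier depth), and in that sense it is appropriate as a pointer to the literature. But since the paper only invokes the result and never attempts to reprove or even sketch it, the expected ``proof'' here is simply the citation; anything beyond that is extra commentary rather than a reconstruction of something in the paper.
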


From now on, we will denote by $F_n$ the free group generated by the set $X{\coloneqq}\set[e_1, \ldots , e_n]$. We call a word $w=u_1 u_2 \ldots u_k$ with $u_i\in X\cup X^{-1}$ \emph{reduced}, if it contains no subword of the form $u u^{-1}$. We say that $w$ is \emph{cyclically reduced}, if it cyclically contains no such subword, that is neither $w$ nor any cyclical permutation of its letters contain a subword $u u^{-1}$. With this notation, $F_n$ can be seen as the set of reduced words over $X$ with multiplication given by concatenation of words followed by reductions.

\begin{defi}
Let $A\subseteq F_n$ be a set of elements in $F_n$. Then $A$ is called \emph{separable}, if there exists a non-trivial free decomposition $F_n=G\ast H$, such that each element of $A$ can be conjugated either into $G$ or into $H$. This means that for each $a\in A$, there exists $x\in F_n$ such that $xax^{-1}\in G\cup H$. 
\end{defi}

The connection between separability and independence in free groups is established by the following result from \cite{ForkingandJSJ} that characterises independence in free groups by the possibility to find proper free decompositions.

\begin{fact}[{\cite[Theorem 1]{ForkingandJSJ}}]
\label{separability and independence}
Let $\bar{u}, \bar{v}$ be tuples of elements in  the free group with $n$ generators and let $S$ be a free factor of $F_n$. Then $\bar{u}$ and $\bar{v}$ are independent over $S$ if and only if $F_n$ admits a free decomposition $F_n=G\ast S\ast H$  with $\bar{u}\in G\ast S$ and $\bar{v}\in S\ast H$.
\end{fact}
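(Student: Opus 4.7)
The plan is to show, for every $k \in \mathbb{N}$, that $wt(p) \geq k$ by exhibiting tuples $\bar b_1, \ldots, \bar b_k$ in a sufficiently large free group which are mutually independent over a well-chosen parameter set, while each forks with a realisation of $p$ over that set. This extends Sklinos' technique for the generic type $p_0$ to an arbitrary non-algebraic type realised in a free group, the new geometric input being Whitehead graphs (which motivates their introduction in Section \ref{Section infinite weight}).

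Fix a realisation $a \in F_n$ of $p$; since $p$ is non-algebraic, $a \neq 1$, and after cyclic conjugation we may take $a$ cyclically reduced. By Fact \ref{Answer Tarski problem}, the inclusion $F_n \hookrightarrow F_N$ is elementary for every $N \geq \max(n,2)$, so $a$ still realises $p$ in $F_N$. Set $N := n + k$ and fix a basis $e_1, \ldots, e_n, f_1, \ldots, f_k$ of $F_N$ extending a basis of $F_n$. Put $S := \langle f_1, \ldots, f_k \rangle$, a free factor of $F_N$. As $a \in F_n$ and $F_N = F_n \ast S$, the element $a$ is independent from $S$ over $\emptyset$, so $q := \mathrm{tp}(a/S)$ is a non-forking extension of $p$; it therefore suffices to show $\text{prwt}(q) \geq k$.

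For each $i$, I would define $\bar b_i$ as a short tuple of cyclically reduced words in the alphabet $\{e_1, \ldots, e_n, f_i\}$ that interleave the letters appearing in $a$ with the new generator $f_i$---a typical candidate, modelled on the surface gluings used in Section \ref{Section build the towers}, is $\bar b_i = (a\, f_i\, a,\; f_i\, a^{-1}\, f_i^{-1})$, or a more elaborate commutator-type pair if $a$ is too short to force the required adjacencies. Mutual $S$-independence of $\{\bar b_1, \ldots, \bar b_k\}$ then follows from Fact \ref{separability and independence} applied to the decomposition $F_N = \langle e_1, \ldots, e_n \rangle \ast \langle f_1 \rangle \ast \cdots \ast \langle f_k \rangle$: to separate $\bar b_i$ from the remaining tuples, place $\langle e_1, \ldots, e_n \rangle$ and the $\langle f_j \rangle$ with $j \neq i$ on one side of a splitting $F_N = G \ast S \ast H$ and $\langle f_i \rangle$ on the other, noting that entries of $\bar b_j$ for $j \neq i$ involve no $f_i$-letter.

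The crux is that $a$ forks with each $\bar b_i$ over $S$, i.e., that no free decomposition $F_N = G \ast S \ast H$ satisfies $a \in G \ast S$ and $\bar b_i \in S \ast H$. By Whitehead's theorem (adapted to finite sets of elements relative to a fixed free factor), such a decomposition would produce a sequence of Whitehead automorphisms of $F_N$ setwise fixing $S$ that reduces the total cyclic length of $\{a\} \cup \bar b_i$ until the associated Whitehead graph---formed on the basis with the $f_i$-letters treated through $S$---either disconnects or acquires a cut vertex. The template for $\bar b_i$ must be tuned so that every letter of $a$ is glued to $f_i$ in this Whitehead graph in a $2$-connected manner, blocking any such reduction. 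This is the principal obstacle: Sklinos' original argument for $p_0$ could exploit the very simple Whitehead graph of a primitive element, whereas here the inseparability of $\{a\} \cup \bar b_i$ must be checked uniformly for any non-trivial cyclically reduced $a$, using the Whitehead-graph criterion developed in Section \ref{Section infinite weight}. Once this combinatorial verification is in place, $\text{prwt}(q) \geq k$ follows, and since $k$ was arbitrary, $wt(p) = \omega$.
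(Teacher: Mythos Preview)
Your proposal is not a proof of the stated Fact at all. The statement in question is Fact~\ref{separability and independence}, the characterisation of forking independence over a free factor by free decompositions; in the paper this is \emph{not} proved but merely quoted from \cite{ForkingandJSJ}. What you have written is instead an outline for Theorem~\ref{Theoreminfiniteweight} (infinite weight of every non-algebraic type realised in a free group), and your argument even \emph{uses} Fact~\ref{separability and independence} as an input. So there is a basic mismatch between the target and the proposal.

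Even read as a sketch for Theorem~\ref{Theoreminfiniteweight}, the approach has a concrete gap. Your mutual independence step does not go through: with $S=\langle f_1,\ldots,f_k\rangle$ and each $\bar b_i$ built from the letters $e_1,\ldots,e_n,f_i$, all of the $\bar b_j$ lie in $\langle e_1,\ldots,e_n\rangle\ast S$, so you cannot place $\bar b_i$ in $G\ast S$ and $\bar b_j$ in $S\ast H$ for a splitting $F_N=G\ast S\ast H$ unless one of them avoids the $e$-letters entirely---which it does not. The paper's proof avoids this by working over the empty set rather than over $S$: it fixes a single explicit sequence $(b_i)_{i<\omega}$ with $b_i\in F_{i+1}$ (Lemma~\ref{my sequence}) that is independent because its initial segments extend to bases, and then checks directly, via the Whitehead graph of $\{a,b_i\}$ in $F_{i+1}$, that no cut vertex exists once $i\ge n$. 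This is both simpler and complete: the ``crux'' you leave as a combinatorial verification is exactly what the paper carries out with the concrete $b_i$'s, whereas your template $\bar b_i=(af_ia,\,f_ia^{-1}f_i^{-1})$ would still require that verification and, as noted, fails the independence requirement.
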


For our purposes, it will suffice to look at the case in which $S=\set[1]$ is trivial and $u$ and $v$ are
elements of $F_n$. Regarding Fact \ref{separability and independence}, we see that independence of $u$ and $v$ over the empty set implies that the set $\set[u,v]$ is separable. So if $\set[u,v]$ is not separable, we know as well that $u$ and $v$ fork over the empty set.

\begin{defi}
Let $A$ be a set of words over $X$ representing elements in the free group $F_n=\langle e_1,\ldots ,e_n\rangle$. The \emph{Whitehead graph} of $A$, which we denote by $W_A$, is the graph with set of vertices $V(W_A)=\set[e_1,\ldots , e_n,e_1^{-1},\ldots , e_n^{-1}]$, and edges joining the vertices $u$ and $v^{-1}$ if and only if one of the words in $A$ cyclically contains the subword $uv$.
\end{defi}

\begin{defi}
Let $W$ be a graph. A vertex $u\in V(W)$ is called a \emph{cut vertex}, if removing $u$ and its adjacent edges leaves the graph disconnected.
\end{defi}

Whitehead graphs occur as projections of closed paths in certain 3-di\-men\-sio\-nal manifolds and were first introduced by Whitehead in \cite{WhiteheadOnCertainSets}. Using this topological picture
, Stallings showed the following fact which is crucial for our method to show that certain elements in free groups fork with each other.
 
\begin{fact}[{\cite[Theorem 2.4]{StalHandlebodies}}]
\label{separable -> cut vertex}
Let $A$ be a set of cyclically reduced words representing elements in $F_n$. If  $A$ is separable in $F_n$, the Whitehead graph $W_A$ has a cut vertex.
\end{fact}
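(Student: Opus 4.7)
My plan is to give Stallings' original topological proof, identifying $F_n$ with $\pi_1(V)$ for a handlebody $V$ of genus $n$ and translating the Whitehead graph into an arc-system on the cut-open boundary.

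Setup and dictionary: Fix a system of meridian disks $D_1,\dots,D_n \subset V$ dual to the generators $e_1,\dots,e_n$. Cutting $V$ along $\bigcup D_i$ produces a $3$-ball $B$ whose boundary sphere $\Sigma$ decomposes as $\bigsqcup_{i,\epsilon\in\{\pm\}} D_i^\epsilon \;\cup\; S$, with $S$ a $2n$-holed sphere. Identify the vertex $e_i$ of $W_A$ with $D_i^+$ and $e_i^{-1}$ with $D_i^-$. A cyclically reduced $w\in A$ is realised by a closed curve $\gamma_w$ on $\partial V$ transverse to the $\partial D_i$; after cutting, $\gamma_w$ decomposes into arcs on $S$, one for each consecutive pair of letters $uv$ appearing cyclically in $w$, joining $D_u$ to $D_{v^{-1}}$. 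These arcs are precisely the edges of $W_A$.

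From separability to a disk: By hypothesis $F_n = G \ast H$ with every $a\in A$ conjugate into $G$ or $H$. The disk theorem for handlebodies (geometric realisation of free splittings via Dehn's lemma) furnishes a properly embedded essential separating disk $\Delta \subset V$ such that the two components of $V\setminus\Delta$ have fundamental groups conjugate to $G$ and $H$. Since each $a$ conjugates into one factor, every $\gamma_w$ is freely homotopic into one side of $\Delta$, and so can be isotoped disjoint from $\Delta$. Put $\Delta$ transverse to $\bigcup D_i$ and to $\bigcup_w\gamma_w$, and minimise $\lvert\Delta\cap\bigcup D_i\rvert$ within the isotopy class of $\Delta$ subject to disjointness from $\bigcup_w\gamma_w$; innermost-disk exchanges in $B$ (using its irreducibility) remove intersection circles, leaving only arcs.

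Producing the cut vertex: The boundary $\partial\Delta$, cut by $\bigcup\partial D_i$, yields arcs on $S$ which, with matching arcs on the $D_i^\pm$'s, close up to a simple closed curve $C$ on $\Sigma$, bounding two disks $\Sigma_G,\Sigma_H$ corresponding to the two sides of $\Delta$. Every edge of $W_A$ is disjoint from $C$ and so lies in $\Sigma_G$ or in $\Sigma_H$; the only vertices with potential edges on both sides are those $D_i^\epsilon$ met by $C$. The main combinatorial step is to show that in the minimal configuration some such vertex is a cut vertex: otherwise, pick an outermost arc $\alpha$ of $\Delta\cap D_i$ bounding a disk $\delta\subset D_i$ whose other boundary arc $\beta\subset\partial D_i$ lies in some $\partial D_i^\epsilon$, and boundary-compress $\Delta$ across $\delta$. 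The only obstruction to such a compression preserving disjointness from $\bigcup_w\gamma_w$ is a $\gamma_w$-crossing of $\beta$; the hard part is to translate ``no admissible compression'' into ``the vertex $D_i^\epsilon$ sees $W_A$-edges on only one side of $C$,'' which combined with the separation of $\Sigma$ by $C$ produces the cut vertex. Degenerate cases ($\Delta$ parallel to some $D_i$, or one of $G,H$ trivial) give the cut vertex directly. This translation is the technical crux of the proof and is where the main effort lies.
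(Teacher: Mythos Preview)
The paper does not give its own proof of this statement: it is recorded as a \emph{Fact} with a citation to \cite[Theorem 2.4]{StalHandlebodies}, preceded only by the remark that Whitehead graphs arise as projections of closed curves in certain $3$-manifolds and that Stallings used this topological picture to prove the result. So there is nothing to compare against on the paper's side.

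Your proposal is a sketch of precisely that handlebody argument of Stallings, and its overall shape is correct: realise $F_n$ as $\pi_1$ of a genus-$n$ handlebody, identify the Whitehead graph of $A$ with the pattern of arcs cut out on the boundary of the complementary ball by curves representing the elements of $A$, produce from the free splitting an essential disk disjoint from these curves, and read off a cut vertex from an outermost-arc argument after minimising intersections. Two caveats are worth flagging. First, the passage ``the disk theorem \ldots\ furnishes a properly embedded essential separating disk $\Delta$ with sides $G$ and $H$'' is doing real work and is not a triviality for an arbitrary free factorisation of $\pi_1(V)$; in Stallings' paper this step goes via an equivariant map to the Bass--Serre tree of the splitting rather than by a direct appeal to Dehn's lemma, and you should be explicit about how you obtain $\Delta$. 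Second, your ``main combinatorial step'' is, as you yourself say, the crux, and what you have written is an indication of the mechanism rather than a proof; Stallings' argument here is short but delicate, and it also covers the degenerate possibility that $W_A$ is already disconnected (which, strictly speaking, is part of the conclusion in his formulation). None of this is a wrong turn---you are on Stallings' road---but the two points above are where a complete write-up would need genuine content.
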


Now we have all necessary tools to construct an independent sequence that witnesses the infinite weight of $p_0$. 

\begin{lem}
\label{my sequence}
The following sequence is independent over the empty set:
\begin{gather*}
(b_i)_{i< \omega}{\coloneqq}(e_2 e_1 e_2,\, e_3 e_2 e_1 e_2^2 e_3,\, \ldots ,\,e_{i+1} e_i \ldots e_2 e_1 e_2^2 e_3^2 \ldots e_i^2 e_{i+1},\, \ldots)\, ,
\end{gather*}
i.e. $b_0\coloneqq e_2 e_1 e_2$ and $b_{i}\coloneqq e_{i+2}b_{i-1}e_{i+1} e_{i+2}$ for $i\geq 1$.
\end{lem}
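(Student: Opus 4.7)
The plan is to show that for every $k \geq 1$, the set $\{b_1, \ldots, b_k, e_{k+1}\}$ is a basis of $F_{k+1}$. Once this is established, $F_{k+1}$ decomposes as the free product
\begin{equation*}
F_{k+1} \;=\; \langle b_1 \rangle \ast \cdots \ast \langle b_k \rangle \ast \langle e_{k+1} \rangle,
\end{equation*}
so Fact \ref{separability and independence} (applied with $S=\{1\}$, $\bar{u}=b_k$, $\bar{v}=(b_1,\ldots,b_{k-1})$, and the decomposition $F_{k+1}=\langle b_k \rangle \ast \langle b_1,\ldots,b_{k-1},e_{k+1}\rangle$) yields that $b_k$ is independent from $(b_1,\ldots,b_{k-1})$ over $\emptyset$ inside $F_{k+1}$. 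By Fact \ref{Answer Tarski problem} this independence persists in any larger $F_n$ and in the monster model of $T_{fg}$; iterating over $k$ then gives that the full sequence $(b_i)_{i<\omega}$ is independent over $\emptyset$.

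The combinatorial core of the argument is the recursive identity
\begin{equation*}
b_k \;=\; e_{k+1}\, b_{k-1}\, e_k\, e_{k+1} \qquad (k\geq 1,\ \text{with } b_0 := 1),
\end{equation*}
which I would verify by direct substitution from the definition: writing $b_k = e_{k+1}\bigl(e_k \cdots e_1 e_2^2 \cdots e_k^2\bigr)e_{k+1}$ and noting that the middle factor is $b_{k-1}\cdot e_k$. The basis claim is then proved by induction on $k$. The base case $k=1$ reduces to showing that $\{e_2 e_1 e_2,\, e_2\}$ is a basis of $F_2$, which is immediate because $e_2^{-1}(e_2 e_1 e_2)e_2^{-1}=e_1$.

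For the inductive step, assume $\{b_1,\ldots,b_{k-1},e_k\}$ is a basis of $F_k$; adjoining $e_{k+1}$ yields a basis of $F_{k+1}$, and the successive Nielsen moves
\begin{equation*}
e_k \;\longmapsto\; b_{k-1}\,e_k \;\longmapsto\; b_{k-1}\,e_k\,e_{k+1} \;\longmapsto\; e_{k+1}\,b_{k-1}\,e_k\,e_{k+1} \;=\; b_k,
\end{equation*}
each modifying only the indicated basis element while leaving the others fixed, transform it into $\{b_1,\ldots,b_{k-1},b_k,e_{k+1}\}$. Since Nielsen transformations send bases to bases, this closes the induction.

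The only genuine obstacle is spotting the recursion and the matching Nielsen moves; after that, the rest is bookkeeping around the two quoted facts. In particular, the Whitehead-graph machinery introduced in this section plays no role in the present lemma -- it is reserved for the subsequent step of the weight argument, where it will be used to show that realisations of the types under consideration fork with each $b_i$.
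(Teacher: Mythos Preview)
Your proof is correct and follows essentially the same approach as the paper: both show that $\{b_1,\ldots,b_k,e_{k+1}\}$ is a basis of $F_{k+1}$ and then invoke Fact~\ref{separability and independence} to deduce independence. The paper simply asserts ``one can easily see'' that this set generates $F_{k+1}$, whereas you supply the explicit recursion $b_k = e_{k+1}b_{k-1}e_k e_{k+1}$ and the corresponding Nielsen moves, which is a welcome elaboration of exactly the computation the paper leaves to the reader.
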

\begin{proof}
One can easily see that 
\begin{gather*}
\langle e_2 e_1 e_2,\, e_3 e_2 e_1 e_2^2 e_3,\,\ldots ,\,e_{n+1} e_n \ldots e_2 e_1 e_2^2 e_3^2 \ldots e_n^2 e_{n+1},\, e_{n+1}\rangle =F_{n+1} .
\end{gather*}
This suffices because as a special case of \ref{separability and independence}, we know that each basis of a free group forms an independent set.
\end{proof}
For the proof of Theorem \ref{Theoreminfiniteweight}, we will make use of the high level of connection in the Whitehead graphs of the elements $b_i$ (see Figure \ref{b_i Whitehead}). 
\begin{figure}
\begin{center}
\includegraphics{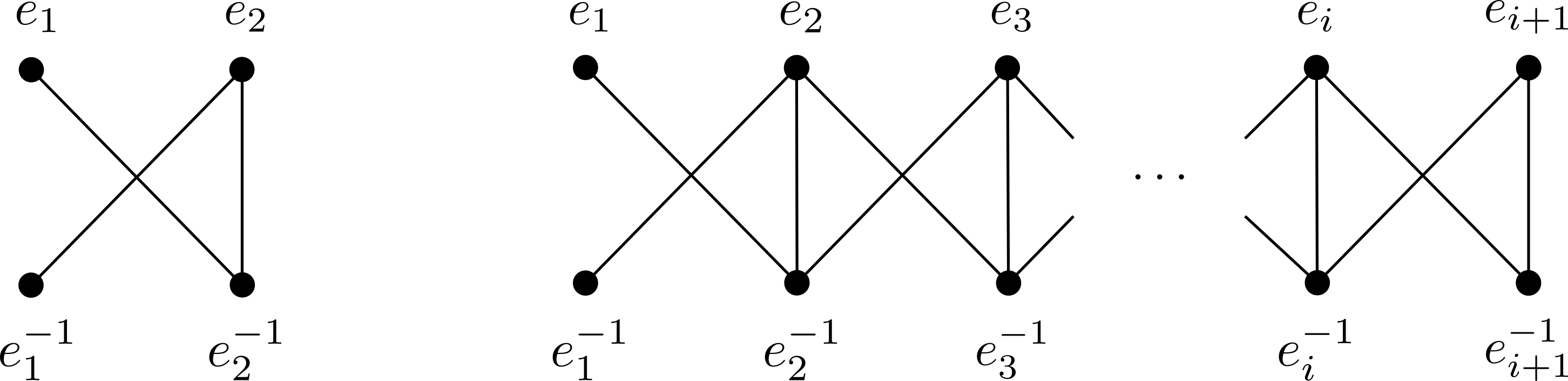}
\end{center}
\caption{The Whitehead graphs of $b_1=e_2 e_1 e_2$ and $b_i$. In both graphs, $e_2$ and $e_2^{-1}$ are the only cut vertices.}
\label{b_i Whitehead}
\end{figure}
\Theoreminfiniteweight*
\begin{proof}
Let $p(x)$ be a type over the empty set with a non-trivial realisation $a\in F\bsl\set[1]$ in some free group $F$. Fix a basis $X=\set[e_1,e_2,\ldots]$ of $F$. Permuting the elements of $X$ induces an automorphism of $F$ and thus does not change the type of $a$ over the empty set. So we may assume $a\in F_n$ for some $n\in \mathbb{N}$. 
As conjugating with an element of $F$ is also an automorphism, we can as well assume that $a$ is cyclically reduced.

Now take $(b_i)_{i<\omega}$ as defined in Lemma \ref{my sequence}. Using Whitehead graphs, we show that after leaving out the first elements of this sequence, the remaining sequence $(b_i)_{n\leq i<\omega}$ witnesses the infinite weight of $p$. By the last lemma, we already know that $(b_i)_i$ is an independent sequence. It remains to show that $a$ forks with $b_i$ over the empty set for all $i\geq n$. To do this, we show that the Whitehead graph $W_A$ of the set $A{\coloneqq}\set[a,b_i]$ has no cut vertex in the free group $F_{i+1}=\langle e_1,\ldots , e_{i+1}\rangle$. In this situation, we can apply Fact \ref{separable -> cut vertex} to see that there is no decomposition
\begin{equation*}
F_{i+1}=G\ast H
\end{equation*}
such that $a\in G$ and $b_i\in H$. This implies that $a$ forks with $b_i$ in $F_{i+1}$ and thus, as the embedding $F_{i+1}\hookrightarrow F$ is elementary (see Fact \ref{Answer Tarski problem}), they fork in $F$ as well and we are finished.

Permuting $X$ again, we may assume that $a$ contains the letter $e_1$. That means that in $W_A$, the vertices $e_1$ and $e_1^{-1}$ are each connected by an edge to at least one other vertex. If the only edges starting at $e_1^{\pm 1}$ end at $e_1^{\mp 1}$, we have $a=e_1^k$ for some $k\in \mathbb{Z}$. In this case, we can easily derive the infinite weight of $p$ from the infinite weight of $p_0$ because by \cite[Corollary 2.7]{PilForking}, we have  $p_0=tp(e_1/\emptyset)$.
So without loss, at least one of the vertices $e_1,e_1^{-1}$ is connected to a vertex $e_k^{\pm 1}$ with $1<k\leq n$. It follows from the definition of Whitehead graphs that in this case, they are in fact both connected to at least one other vertex. Applying another automorphism, we may assume that $a$ does neither contain $e_2$ nor its inverse such that $e_1$ is either connected to a vertex $e_k$ or $e_k^{-1}$ and that $e_1^{-1}$ is connected to $e_{k'}$ or $e_{k'}^{-1}$ where both $k$ and $k'$ are greater than 2 (we do not assume that those vertices are distinct). So $W(\set[a])$ contains at least the two edges shown in Figure \ref{edges in Whitehead}.

\begin{figure}
\begin{center}
\includegraphics{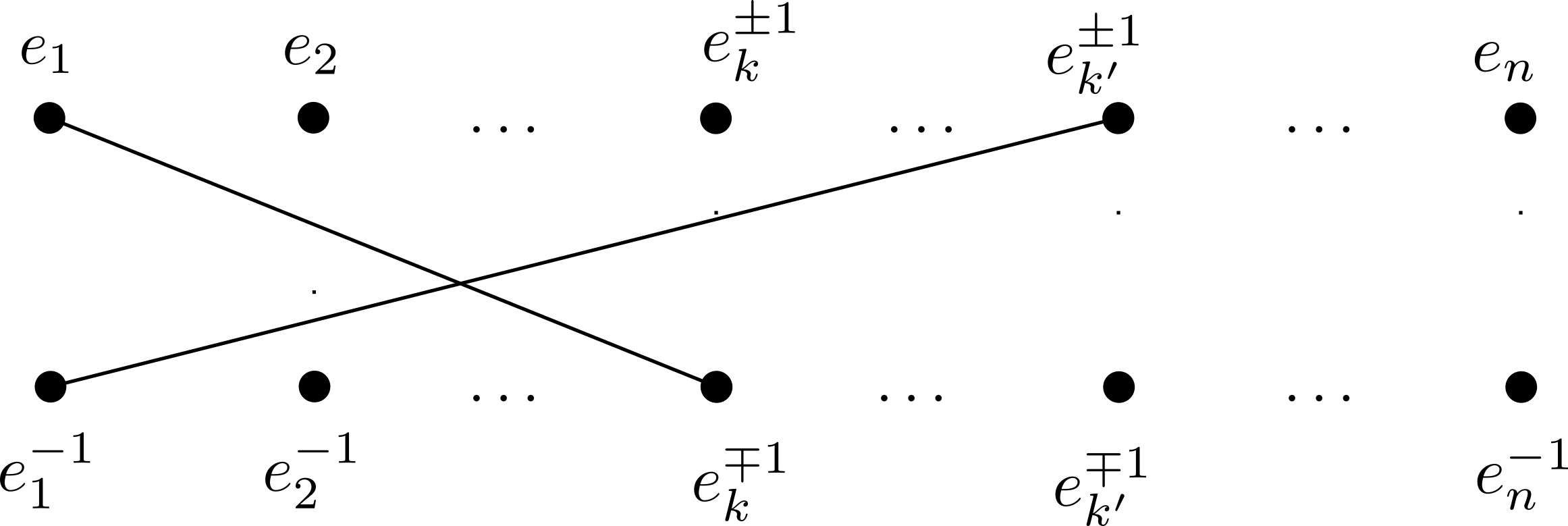}
\end{center}
\caption{Edges in $W(\set[a])$}
\label{edges in Whitehead}
\end{figure}

On the other hand, we already know  that the Whitehead  graph $W_{\set[b_i]}$ is of the form shown in Figure \ref{b_i Whitehead}.
Since $W_A$ is the union of $W_{\set[a]}$ and $W_{\set[b_i]}$, one sees that it has no cut vertex, because removing $e_2^{\pm 1}$ no longer disconnects $e_1^{\mp 1}$ from the rest of the graph. This finishes the proof.
\end{proof}

\bibliographystyle{amsalpha}
\bibliography{mybibliography}
 \bigskip
  \footnotesize
  Benjamin Br\"uck\\ \textsc{Fakult\"at f\"ur Mathematik \\
Universit\"at Bielefeld\\
Postfach 100131\\
D-33501 Bielefeld\\
Germany} \\ \nopagebreak
  \texttt{benjamin.brueck@uni-bielefeld.de}

\end{document}